\newtheorem{theorem}{Theorem}[section]
\newtheorem{lemma}[theorem]{Lemma}
\theoremstyle{definition}
\newtheorem{conjecture}[theorem]{Conjecture}
\newtheorem{remark}[theorem]{Remark}
\numberwithin{equation}{section}
\begin{document}
\title[Moment of Kummer sums weighted by $L$-functions]
{Moment of Kummer sums weighted by $L$-functions}
\author{Nilanjan Bag}
 \address{Department of Mathematics, Ramakrishna Mission Vivekananda Educational and Research Institute, Belur, Howrah,  West Bengal-711202, India}
\email{nilanjanb2011@gmail.com}
\subjclass[2020]{11L05, 11L07.}
\date{18th January, 2023}
\keywords{generalized quadratic Gauss sums; Legendre symbol; asymptotic formula.}
\begin{abstract}
The main purpose of this article is to study higher order moments of Kummer sums weighted by $L$-functions using estimates for character sums and analytic methods. The results of this article complement a conjecture of Zhang Wenpeng (2002). Also the results in this article give analogous results of Kummer's conjecture (1846).
\end{abstract}
\maketitle
\section{Introduction and statements of the results}
Let $q\geq 2$ and $n$ be integers. For Dirichlet character $\chi\bmod q$, the generalized $k^\text{th}$ Gauss sums $G(n,k,\chi;q)$ are defined as
\begin{equation}
 G(n,k,\chi;q)=\sum_{a=1}^q\chi(a)e\left(\frac{na^k}{q}\right),\notag
\end{equation}
where $e(y)=e^{2\pi iy}$. The values of $G(n,k,\chi;q)$ behave irregularly whenever $\chi$ varies. For $k=2$, i.e. in the case of generalized quadratic Gauss sums, T. Cochrane, Z. Y. Zheng \cite{cochrane} proved that
\begin{align*}
|G(n,2,\chi;q)|\leq 2^{\omega (q)}\sqrt{q},
\end{align*}
where $\omega(q)$ denotes the number of distinct prime divisors of $q$. 
In case of prime $p$, finding such bounds is due to Weil \cite{weil, weil-2}. The Dirichlet $L$-function $L(s,\chi)$ corresponding to the character $\chi \bmod q$  is defined by 
\begin{align*}
L(s,\chi)=\sum_{n=1}^{\infty}\frac{\chi(n)}{n^s}.
\end{align*}
In analytic number theory many problems become very difficult when specialised to $L$-functions. Therefore it is a very important theme to study $L$-functions, specially higher moments at critical line.
\par In \cite{kummer}, Kummer studied the distribution of the cubic exponential sums
\begin{align*}
S_p=\sum_{a=1}^{p}e\left(\frac{a^3}{p}\right),
\end{align*}
for prime $p\equiv 1\pmod 3$. When weighted by some Dirichlet character $\chi$, we denote this sum as $S_p(n;\chi)$, which is defined as 
\begin{align*}
S_p(n;\chi)=\sum_{a=1}^p\chi(a)e\left(\frac{na^3}{p}\right).
\end{align*}
Hence $S_p(n;\chi)= G(n,3,\chi;p)$. It is well known that $S_p\leq 2\sqrt{p}$ and one can subsequently write 
\begin{align*}
\frac{S_p}{2\sqrt{p}}=\cos(2\pi \theta_p),
\end{align*}
where $\theta_p\in [0,1]$. To investigate the distribution of $\theta_p$ in $[0,1]$ Kummer computed the frequency with which $\cos(2\pi\theta_p)$ lie in the interval $[-1,-\frac{1}{2}]$, $[-\frac{1}{2}, \frac{1}{2}]$ and $[\frac{1}{2},1]$ upto $p\leq 500$. He found that $\frac{S_p}{2\sqrt{p}}$ lie with a frequency $1:2:3$ and he hesitantly conjectured that it may be true asymptotically. Later this conjecture was disproved by Heath-Brown and Patterson \cite{HB-P}. In light of the numerical evidence, Patterson \cite{pat} conjectured that
\begin{conjecture}[Patterson, 1978]
As $X\rightarrow\infty$, 
\begin{align*}
\sum_{\substack{p\leq X\\ p\equiv 1\bmod 3}}\frac{S_p}{2\sqrt{p}}\sim d\frac{X^{5/6}}{\log X}
\end{align*}
where $d=\frac{2(2\pi)^{2/3}}{5\Gamma(\frac{2}{3})}$ and $p$ runs through primes. Here $\Gamma$ is the classical gamma function.
\end{conjecture}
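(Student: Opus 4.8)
The plan is to recognize the Kummer sum $S_p$ as the real part of a classical cubic Gauss sum and to convert the conjecture into a statement about the \emph{first moment} of such Gauss sums, whose size is controlled by the residue of a metaplectic $L$-function. The first step is the elementary reduction. Since $p\equiv 1\pmod 3$ there is a cubic character $\chi_3\bmod p$, and the number of cube roots of $t\neq 0$ is $1+\chi_3(t)+\overline{\chi_3}(t)$; separating the principal part (which cancels against the $a=0$ term because $\sum_{t=1}^{p-1}e(t/p)=-1$) gives
\begin{equation}
S_p=\sum_{a=1}^{p}e\!\left(\frac{a^3}{p}\right)=\tau(\chi_3)+\tau(\overline{\chi_3})=2\,\mathrm{Re}\,\tau(\chi_3),\notag
\end{equation}
where $\tau(\chi_3)=\sum_{t}\chi_3(t)e(t/p)$ and $\tau(\overline{\chi_3})=\overline{\tau(\chi_3)}$ because $\chi_3(-1)=1$. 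As $|\tau(\chi_3)|=\sqrt p$, this is exactly $2\sqrt p\cos(2\pi\theta_p)$, so the conjecture is equivalent to showing that $\sum_{p\le X}\cos(2\pi\theta_p)$ has the secondary main term $d\,X^{5/6}/\log X$. The content is that the angles $\theta_p$ are \emph{not} equidistributed: there is a bias whose exact size must be pinned down.

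I would then move the problem into the Eisenstein integers $\mathbb{Z}[\omega]$, $\omega=e^{2\pi i/3}$, where the cubic residue symbol $\left(\tfrac{\cdot}{\cdot}\right)_3$ and cubic reciprocity are available. A rational prime $p\equiv 1\pmod 3$ splits as $p=\pi\overline{\pi}$, and $\tau(\chi_3)$ is attached to $\pi$ through a normalized cubic Gauss sum $g(\pi)$. The object encoding the first moment is the Kubota--Patterson Dirichlet series
\begin{equation}
\psi(s)=\sum_{c}\frac{g(c)}{N(c)^{s}},\notag
\end{equation}
summed over squarefree $c\in\mathbb{Z}[\omega]$, whose partial sums over $N(c)\le X$ recover $\sum_{p\le X}S_p/\sqrt p$ up to lower-order contributions from prime powers and ramification. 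The decisive structural input, due to Kubota, is that $g(c)$ is essentially a Fourier coefficient of a metaplectic Eisenstein series on the cubic cover of $GL_2$ over $K=\mathbb{Q}(\omega)$.

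The heart of the matter is analytic. From the theory of metaplectic Eisenstein series one obtains the meromorphic continuation and functional equation of $\psi(s)$ and, crucially, locates its pole: the residual spectrum contains the Kubota--Patterson cubic theta function, and it is this residual term that forces a pole of $\psi(s)$ on the line $\mathrm{Re}(s)=5/6$ (in the normalization of the statement), the continuous spectrum being holomorphic there. Computing the residue at this pole and carrying the archimedean $\Gamma$-factors and volume constants through the functional equation should produce exactly $d=\frac{2(2\pi)^{2/3}}{5\Gamma(2/3)}$; the shape $X^{5/6}/\log X$ then follows by Perron's formula and a contour shift, the factor $1/\log X$ reflecting the restriction to primes.

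I expect the main obstacle to be precisely this continuation-and-residue step: proving that $\psi(s)$ is holomorphic apart from the single theta-pole, bounding it on vertical lines well enough to justify the contour shift, and—most delicately—evaluating the residue exactly. This is where the history of the problem cautions us, since it was the misjudged size of the bias that turned Kummer's empirical $1:2:3$ ratio into a false conjecture, later corrected by Heath-Brown and Patterson \cite{HB-P}. A secondary difficulty is descending from the full sum over $c$ to prime $c$ without damaging the main term, which requires uniform control of the Gauss sums at prime powers and of all error terms.
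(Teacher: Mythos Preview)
The paper does not prove this statement at all: it is presented there as Patterson's \emph{Conjecture} (1978), quoted only as historical motivation for the author's own moment computations, and nowhere in the paper is a proof attempted. There is therefore no ``paper's own proof'' against which to compare your proposal.

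As a strategy your outline is the standard one---rewrite $S_p$ via the cubic character as $2\,\mathrm{Re}\,\tau(\chi_3)$, pass to cubic Gauss sums over $\mathbb Z[\omega]$, and invoke the Kubota--Patterson metaplectic machinery to locate the pole responsible for the $X^{5/6}$ term---and the reductions you describe are correct as far as they go. But you should be aware that what you call the ``main obstacle'' is not a routine technicality: the precise step of descending from the full Dirichlet series over all moduli to a sum over \emph{prime} moduli, while retaining a main term of size $X^{5/6}/\log X$ rather than merely $o(X/\log X)$, is exactly the content of Patterson's conjecture and is \emph{not} delivered by the Heath-Brown--Patterson work you cite. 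That paper established equidistribution (the leading term vanishes), but isolating the secondary term requires much sharper error terms in the metaplectic large sieve than were then available. Your proposal correctly identifies where the difficulty lies but does not resolve it, so it is a roadmap rather than a proof; the conjecture remained open for decades and was only settled (conditionally on GRH) by Dunn and Radziwi\l\l\ in 2021, by methods going well beyond what you sketch.
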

The goal of this article is to make similar study to Kummer sums when taken average over group of characters mod $p$. In particular, in this article we study second and fourth power mean values of Kummer sums and calculate moments when the sums are weighted by $L$-functions. To be specific, we prove the following theorems.
\begin{theorem}\label{mt1}
Let $p$ an odd prime such that $p\equiv 1\bmod 3$ and $n$ be an integer. Then for any Dirichlet character $\chi \bmod p$, we have an asymptotic formula
\begin{align*}
\sum_{\chi \bmod p}|S_p(n;\chi)|^{2}=p^2+O(p^{3/2}).
\end{align*}

\end{theorem}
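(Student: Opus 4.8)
The plan is to evaluate the second moment by first writing the weighted cubic sum $S_p(n;\chi)$ as a short combination of classical Gauss sums — a reduction that uses the hypothesis $p\equiv 1\bmod 3$ in an essential way — and only afterwards summing over $\chi$. I may assume $\gcd(n,p)=1$, since if $p\mid n$ then $S_p(n;\chi)=\sum_a\chi(a)$ vanishes unless $\chi$ is principal and the moment is $(p-1)^2$ outright. Throughout, write $\tau(\psi)=\sum_{a=1}^{p-1}\psi(a)e(a/p)$ for the Gauss sum of a character $\psi\bmod p$.

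First I would invoke the additive-to-multiplicative expansion $e(x/p)=\frac{1}{p-1}\sum_\psi\overline\psi(x)\tau(\psi)$, valid for $\gcd(x,p)=1$, taken at $x=na^3$. Substituting into $S_p(n;\chi)=\sum_{a=1}^{p-1}\chi(a)e(na^3/p)$ and interchanging summations, the inner sum over $a$ becomes $\sum_a(\chi\,\overline{\psi^3})(a)$, where $\psi^3$ denotes the character $a\mapsto\psi(a)^3$. By orthogonality this equals $p-1$ exactly when $\psi^3=\chi$ and $0$ otherwise. Since $3\mid p-1$, the cubing map on the character group has kernel of order $3$, so $\psi^3=\chi$ has three solutions when $\chi$ is a cube and none otherwise, producing the structural identity
\begin{equation*}
S_p(n;\chi)=\sum_{\psi:\,\psi^3=\chi}\overline\psi(n)\,\tau(\psi).
\end{equation*}

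Next I would form $|S_p(n;\chi)|^2$ and sum over all $\chi\bmod p$. Expanding the product as a double sum over $\psi,\psi'$ with $\psi^3={\psi'}^3=\chi$ and then letting $\chi$ range freely, the constraint reduces to $\psi^3={\psi'}^3$, i.e.\ $\psi'=\psi\eta$ with $\eta$ ranging over the three characters of order dividing $3$. Using $\overline\psi(n)\,(\psi\eta)(n)=\eta(n)$ for $\gcd(n,p)=1$, the moment becomes
\begin{equation*}
\sum_{\chi\bmod p}|S_p(n;\chi)|^2=\sum_{\eta^3=\chi_0}\eta(n)\sum_{\psi}\tau(\psi)\,\overline{\tau(\psi\eta)}.
\end{equation*}
The diagonal term $\eta=\chi_0$ contributes $\sum_\psi|\tau(\psi)|^2$; invoking $|\tau(\psi)|^2=p$ for $\psi\neq\chi_0$ together with $\tau(\chi_0)=-1$ yields $(p-2)p+1=p^2+O(p)$, the main term.

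The main obstacle is therefore the two off-diagonal terms in which $\eta$ is a nontrivial cubic character. Here I would expand both Gauss sums and apply orthogonality in $\psi$ a second time: the sum over $\psi$ forces the two additive arguments to coincide, leaving $(p-1)\sum_{a=1}^{p-1}\overline\eta(a)$, which vanishes because $\eta\neq\chi_0$. Thus the off-diagonal contribution is identically $0$, and one recovers the sharp value $(p-1)^2=p^2+O(p)$, comfortably inside the stated $p^2+O(p^{3/2})$; should one wish to avoid the exact cancellation, these terms can instead be crudely bounded via $|\tau(\psi)|\le\sqrt p$ and absorbed into the error. I expect the only delicate bookkeeping to be the uniform handling of the principal-character exceptions $\tau(\chi_0)=-1$ against $|\tau(\psi)|^2=p$, since it is precisely these boundary terms that govern the size of the error.
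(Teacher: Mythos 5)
Your argument is correct, and it in fact proves more than the theorem asks: the expansion $e(x/p)=\frac{1}{p-1}\sum_\psi\overline\psi(x)\tau(\psi)$ is valid for $\gcd(x,p)=1$ (which your reduction to $\gcd(n,p)=1$ guarantees), the resulting identity $S_p(n;\chi)=\sum_{\psi^3=\chi}\overline\psi(n)\tau(\psi)$ is right since the cubing map on the cyclic character group has kernel of order $\gcd(3,p-1)=3$, the diagonal gives $(p-2)p+1=(p-1)^2$, and the off-diagonal terms vanish identically under the second application of orthogonality. So you obtain the second moment exactly, $(p-1)^2$ in both the $p\mid n$ and $p\nmid n$ cases, i.e.\ with error $O(p)$ rather than the stated $O(p^{3/2})$. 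This is a genuinely different route from the paper's. The paper first establishes an identity for each fixed $\chi$ (Lemma \ref{lem-1}): it opens $|S_p(n;\chi)|^2$ as a double sum, substitutes $a\mapsto ab$ to diagonalize, evaluates the completed inner sum $\sum_b e(x_ab^3/p)$ through the cubic residue symbol $\lambda$, and separates out the solutions $1,a_1,a_2$ of $x^3\equiv 1\bmod p$; only afterwards does it sum over $\chi$ and apply orthogonality, bounding the Gauss-sum remainder crudely by $O(p^{3/2})$. Your decomposition of $S_p(n;\chi)$ itself into at most three Gauss sums is cleaner and yields the sharp answer, but the paper's lossier identity is the one that gets reused downstream: the explicit terms $p(1+\chi(a_1)+\chi(a_2))$ are exactly what is paired against $|L(1,\chi)|$ via Lemma \ref{lem-5} in Theorems \ref{mt3} and \ref{mt4}, producing the constants $C_{a_1}$, $C_{\overline a_1}$, whereas your $\psi$-decomposition would require new lemmas in the $\psi$-aspect to handle those weighted moments. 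One further remark: for this particular unweighted moment, orthogonality applied directly to the opened double sum already gives $\sum_{\chi\bmod p}|S_p(n;\chi)|^2=(p-1)\sum_{a=1}^{p-1}1=(p-1)^2$ in one line, so your Gauss-sum identity, while structurally illuminating, is a somewhat longer path to the same exact value.
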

Also we get higher power mean values, which is given in the following theorem.
\begin{theorem}\label{mt2}
Let $p$ an odd prime such that $p\equiv 1\bmod 3$ and $n$ be an integer. Then for any Dirichlet character $\chi \bmod p$, we have an asymptotic formula
\begin{align*}
\sum_{\chi \bmod p}|S_p(n;\chi)|^{4}=5p^3+O(p^{5/2}).
\end{align*}
\end{theorem}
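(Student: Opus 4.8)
The plan is to expand the fourth power, execute the sum over $\chi$ by orthogonality, and reduce the resulting complete exponential sum to classical cubic Gauss sums; throughout I assume $(n,p)=1$, the case $p\mid n$ being degenerate. Writing $S_p(n;\chi)=\sum_{a=1}^{p-1}\chi(a)e(na^3/p)$ and multiplying out, the sum over characters is handled by
\begin{align*}
\sum_{\chi\bmod p}|S_p(n;\chi)|^4=\sum_{a,b,c,d=1}^{p-1}e\!\left(\frac{n(a^3-b^3+c^3-d^3)}{p}\right)\sum_{\chi\bmod p}\chi(ac)\overline{\chi(bd)}.
\end{align*}
Since the inner sum equals $p-1$ when $ac\equiv bd\pmod p$ and $0$ otherwise, the moment equals $(p-1)N$ where
\begin{align*}
N=\sum_{\substack{a,b,c,d=1\\ ac\equiv bd\,(p)}}^{p-1}e\!\left(\frac{n(a^3-b^3+c^3-d^3)}{p}\right).
\end{align*}
The decisive move is to parametrise the solutions of $ac\equiv bd$ in $(\mathbb Z/p)^{\ast}$ bijectively by $(r,b,d)\in((\mathbb Z/p)^{\ast})^{3}$ through $a=rb,\ c=r^{-1}d$; this factorises $N$ into
\begin{align*}
N=\sum_{r=1}^{p-1}T\big(n(r^3-1)\big)\,T\big(n(r^{-3}-1)\big),\qquad T(m):=\sum_{b=1}^{p-1}e\!\left(\frac{mb^3}{p}\right).
\end{align*}

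Next I would pass to the complete cubic Gauss sum $C(m)=\sum_{b\bmod p}e(mb^3/p)=T(m)+1$, which satisfies $C(0)=p$ and, for $m\neq0$, $C(m)=\overline{\psi}(m)\tau(\psi)+\psi(m)\tau(\overline{\psi})$, where $\psi$ is a fixed cubic character $\bmod\,p$ (here $p\equiv1\pmod3$ is used) and $\tau$ denotes the Gauss sum; in particular $|C(m)|\le 2\sqrt p$, in line with the bound recalled in the introduction. Because the summand of $N$ depends on $r$ only through $g=r^{3}$ and $r\mapsto r^{3}$ is exactly three-to-one onto the group $H$ of cubic residues, this becomes
\begin{align*}
N=3\sum_{g\in H}T\big(n(g-1)\big)\,T\big(n(g^{-1}-1)\big).
\end{align*}

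The main term is extracted by isolating the diagonal $g=1$, which contributes $3\,T(0)^2=3(p-1)^2=3p^2+O(p)$. For $g\neq1$ both arguments are nonzero, so I substitute $T=C-1$ and expand. Using $\tau(\psi)\tau(\overline{\psi})=\psi(-1)p=p$ (a cubic character has $\psi(-1)=1$) together with the identities $g^{-1}-1=-(g-1)/g$ and $\psi(g)=1$ for $g\in H$, the two mixed pieces $\overline{\psi}\tau(\psi)\cdot\psi\tau(\overline{\psi})$ collapse to exactly $2p$ for each $g\in H\setminus\{1\}$; summing and restoring the factor $3$ gives $3\cdot 2p\,(|H|-1)=2p^{2}+O(p)$. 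Collecting, $N=3p^{2}+2p^{2}+O(p^{3/2})=5p^{2}+O(p^{3/2})$, and therefore
\begin{align*}
\sum_{\chi\bmod p}|S_p(n;\chi)|^{4}=(p-1)N=5p^{3}+O(p^{5/2}).
\end{align*}

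The main obstacle is the honest estimation of the leftover character sums, principally the pieces carrying $\tau(\psi)^{2}$ and $\tau(\overline{\psi})^{2}$, which reduce to $\tau(\psi)^{2}\psi(n)\sum_{g\in H}\psi(g-1)$ and its conjugate, as well as the linear sums $\sum_{g\in H}C(n(g-1))$. Writing the indicator $\mathbf 1_{g\in H}=\tfrac13\big(1+\psi(g)+\overline{\psi}(g)\big)$ turns these into Jacobi-type sums such as $\sum_{g}\psi\big(g(g-1)\big)$; bounding them by $O(\sqrt p)$ through the Weil estimate, and verifying that the $\tau(\psi)^{2}$-pieces really do land in the error term $O(p^{3/2})$ rather than in the main term, is the delicate part of the argument.
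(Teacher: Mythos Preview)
Your argument is correct, and it reaches the same conclusion as the paper, but the organisation is genuinely different. The paper does not expand the fourth power directly; instead it first proves a pointwise identity (its Lemma~3.1) of the shape
\[
|S_p(n;\chi)|^{2}=p\bigl(1+\chi(a_1)+\chi(a_2)\bigr)+\sum_{\substack{a=2\\ a\neq a_1,a_2}}^{p-1}\chi(a)\bigl(g(\lambda)\lambda(\overline{x_a})+g(\lambda^{2})\lambda^{2}(\overline{x_a})\bigr),
\]
then squares and splits into three blocks $A_1,A_2,A_3$; the secondary main term $2p^{3}$ emerges only after a somewhat lengthy Jacobi–sum manipulation of $A_3'$. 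Your route bypasses Lemma~3.1 entirely: you apply orthogonality immediately, parametrise the constraint $ac\equiv bd$ by $(r,b,d)\mapsto(rb,b,r^{-1}d,d)$, and thereby factor the whole moment as $(p-1)\sum_{r}T(n(r^{3}-1))T(n(r^{-3}-1))$. The observation that $\psi(g^{-1}-1)=\psi(g-1)$ for $g\in H$ then makes the cross terms collapse to the constant $2p$ with no further work, so the $2p^{2}$ contribution appears transparently. The remaining error bounds that you flag (the $\tau(\psi)^{2}$ pieces and the linear sums in $C$) are exactly the Jacobi--type sums you describe, and your use of $\mathbf 1_{H}=\tfrac13(1+\psi+\overline{\psi})$ together with Weil's bound is the right way to handle them; each is $O(\sqrt p)$, yielding the claimed $O(p^{3/2})$ in $N$. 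In short, your proof is shorter and more self--contained for this single theorem, while the paper's approach has the compensating advantage that the same Lemma~3.1 is reused verbatim in the proofs of the $L$--weighted Theorems~1.3 and~1.4.
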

Deriving the above moments when weighted by $L$-functions becomes critical with higher level of difficulty. In this article we capture the second and fourth power moment of the Kummer sums weighted by $L$-functions.
First we fix a constant $C$, as 
\begin{align}\label{constant-c}
C=\prod_p\left[1+\frac{\binom{2}{1}^2}{4^2.p^2}+\frac{\binom{4}{2}^2}{4^4.p^4}+\cdots+\frac{\binom{2m}{m}^2}{4^{2m}.p^{2m}}+\cdots\right].
\end{align}
Let $r(n)$ be a multiplicative function defined by
\begin{align*}
r(p^{\alpha})=\frac{\binom{2\alpha}{\alpha}}{4^{\alpha}},
\end{align*}
with $r(1)=1$, where $\alpha$ is a positive integer and $p$ is a prime. For $1\leq t\leq p-1$ we fix 
\begin{align*}
C_t=\sum_{s|t}\frac{r(s)r(st)}{s^2t}.
\end{align*}
Consider the equation $x^3-1\equiv 0\bmod p$. This congruence has solutions apart from $1$, when $p\equiv 1 \bmod 3 $. We fix the solutions as $a_1$ and $a_2$, where $a_1,a_2\neq 1$. Notice that $a_1a_2\equiv 1 \mod p$.
Now we are ready to state the following two mixed moments.
\begin{theorem}\label{mt3}
Let $p$ an odd prime such that $p\equiv 1\bmod 3$ and $n$ be an integer. Then for any Dirichlet character $\chi \bmod p$, we have an asymptotic formula
\begin{align*}
\sum_{\chi\neq\chi_0}|S_p(n;\chi)|^{2}\cdot |L(1,\chi)|=C\cdot (1+C_{a_1}+C_{\overline{a}_1}) \cdot p^2+O(p^{3/2}\cdot \ln p).
\end{align*}
\end{theorem}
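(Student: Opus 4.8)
The plan is to reduce this weighted moment to the combinatorial structure underlying Theorem \ref{mt1} by turning the weight $|L(1,\chi)|$ into a double Dirichlet series. Since $L(1,\chi)\neq 0$ for $\chi\neq\chi_0$, I would write $|L(1,\chi)|=L(1,\chi)^{1/2}\,L(1,\overline\chi)^{1/2}$ and expand each factor by applying the binomial series $(1-x)^{-1/2}=\sum_{m\ge0}\binom{2m}{m}4^{-m}x^m$ to the Euler product. This is precisely what manufactures the coefficients $r(n)$: formally $L(s,\chi)^{1/2}=\sum_{n\ge1}r(n)\chi(n)n^{-s}$, so at $s=1$ one obtains $|L(1,\chi)|=\sum_{u,v\ge1}\frac{r(u)r(v)}{uv}\,\chi(u)\overline\chi(v)$. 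To make this rigorous I would truncate both series at a height $N\asymp p$ and control the tails using the non-vanishing of $L(1,\chi)$ together with partial summation and P\'olya--Vinogradov applied to the weighted character sums $\sum_{n\le X}r(n)\chi(n)$. The constant $C$ of \eqref{constant-c} appears here at once, since $C=\sum_{n\ge1}r(n)^2/n^2=\prod_p\bigl(1+\sum_{m\ge1}\binom{2m}{m}^2 4^{-2m}p^{-2m}\bigr)$ by multiplicativity of $r(n)^2$.

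Next I would interchange the $\chi$-sum with the $(u,v)$-sum and evaluate the inner quantity $T(u,v):=\sum_{\chi\neq\chi_0}|S_p(n;\chi)|^2\chi(u)\overline\chi(v)$. Expanding $|S_p(n;\chi)|^2=\sum_{a,b}\chi(a)\overline\chi(b)\,e(n(a^3-b^3)/p)$ and invoking orthogonality of the characters mod $p$, the terms with $p\mid uv$ vanish, and for $p\nmid uv$ one is left with $T(u,v)=(p-1)\sum_{au\equiv bv\,(p)}e(n(a^3-b^3)/p)-\bigl|\sum_{a=1}^{p-1}e(na^3/p)\bigr|^2$. Substituting $b\equiv au\overline v\pmod p$ collapses the double sum into the single cubic sum $\sum_{a=1}^{p-1}e\bigl(na^3(1-(u\overline v)^3)/p\bigr)$, which is the object to be analysed.

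The decisive step is the dichotomy on whether $(u\overline v)^3\equiv 1\pmod p$. Because $x^3\equiv 1$ has exactly the three roots $1,a_1,a_2$ when $p\equiv 1\bmod 3$, the cubic sum equals $p-1$ precisely when $u\equiv v$, $u\equiv va_1$, or $u\equiv v\overline a_1\pmod p$, giving a full main term $(p-1)^2$; for every other residue class it is a genuine cubic Weil sum of size $O(\sqrt p)$. Summing the three surviving families against $r(u)r(v)/(uv)$, the diagonal $u=v$ of the family $w=1$ yields $Cp^2$, while the families $w=a_1,\overline a_1$ produce the lower-order corrections $C\,C_{a_1}p^2$ and $C\,C_{\overline a_1}p^2$ after a multiplicative analysis of the congruence sums $\sum_{u\equiv vt\,(p)}r(u)r(v)/(uv)$; together these give the factor $1+C_{a_1}+C_{\overline a_1}$.

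Finally, for the error I note that the $-\bigl|\sum e(na^3/p)\bigr|^2$ terms and the $p\mid uv$ bookkeeping are $O(p)$ in total, so the dominant error is the Weil contribution, bounded trivially by $O(p^{3/2})\sum_{u,v\le N}r(u)r(v)/(uv)$. The key estimate here is $\sum_{u\le N}r(u)/u\asymp(\log N)^{1/2}$ — the square-root of the pole of $\zeta(s)$ reflected in $L(s,\chi)^{1/2}$ — so the double sum is $\asymp\log N\asymp\ln p$, delivering the stated $O(p^{3/2}\ln p)$. I expect the main obstacle to be twofold: rigorously controlling the tail of this ``square-root $L$-function'' expansion, whose series $\sum r(n)\chi(n)/n$ converges only conditionally, and balancing the truncation height $N\asymp p$ so that the truncation tail and the Weil error are simultaneously of size $p^{3/2}\ln p$, thereby pinning down both the precise constant $C(1+C_{a_1}+C_{\overline a_1})$ and the exact logarithmic loss.
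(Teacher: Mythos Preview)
Your approach is correct, and it rests on the same underlying device as the paper --- the expansion $|L(1,\chi)|=\bigl|\sum_{n}r(n)\chi(n)/n\bigr|^2$ coming from $L(s,\chi)^{1/2}$ --- but the organization is genuinely different. The paper first evaluates $|S_p(n;\chi)|^2$ as a function of $\chi$ (Lemma~\ref{lem-1}), obtaining the closed form $p(1+\chi(a_1)+\chi(a_2))$ plus a tail of size $O(\sqrt p)$, and only then sums against $|L(1,\chi)|$; the three twisted first moments $\sum_{\chi}\chi(t)|L(1,\chi)|$ for $t\in\{1,a_1,a_2\}$ are handled by the prepackaged Lemmas~\ref{lem-4} and~\ref{lem-5}, while the tail is absorbed by Zhang's averaged bound $\sum_{a}\bigl|\sum_{\chi}\chi(a)|L(1,\chi)|\bigr|=O(p\ln p)$ (Lemma~\ref{lem-2}). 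You instead expand $|L(1,\chi)|$ first, swap the $\chi$-sum inside, and let orthogonality produce the cubic dichotomy on $(u\overline v)^3\equiv 1\pmod p$ directly; your error term comes from the Weil bound rather than from Lemma~\ref{lem-2}. What your route buys is self-containment and a transparent explanation of why exactly three congruence classes survive; what the paper's modular route buys is reusability --- the same lemmas feed straight into the fourth-moment Theorem~\ref{mt4} without reopening the $L$-function analysis. One minor point: the paper truncates at $N=p^{3/2}$ rather than $N\asymp p$, which makes the tail estimate in \eqref{l(1,X)} immediate; with your choice $N\asymp p$ you should check that the truncation still costs only $O(\ln p)$ per character, though this is routine via Abel summation and P\'olya--Vinogradov as you indicate.
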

\begin{theorem}\label{mt4}
Let $p$ an odd prime such that $p\equiv 1\bmod 3$ and $n$ be an integer. Then for any Dirichlet character $\chi \bmod p$, we have an asymptotic formula
\begin{align*}
\sum_{\chi\neq\chi_0}|S_p(n;\chi)|^{4}\cdot |L(1,\chi)|= C\cdot \left(3+2(C_{a_1}+C_{\overline{a}_1})+C_{a_1^2}+C_{\overline{a}_1^2}\right)\cdot p^3\\
\hspace{1cm}+O(p^{5/2}\cdot \ln p).
\end{align*}
\end{theorem}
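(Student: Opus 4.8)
The plan is to open the absolute squares, reduce everything to the single Kummer variable, and then re-use the $L$-weighted character-sum estimate that underlies Theorem \ref{mt3}. After the substitution $a\mapsto bc$ one has
\[
|S_p(n;\chi)|^{2}=\sum_{c=1}^{p-1}\chi(c)\,W(c),\qquad W(c)=\sum_{b=1}^{p-1}e\!\left(\frac{nb^{3}(c^{3}-1)}{p}\right),
\]
and the cube roots of unity $c\in\{1,a_1,a_2\}$, for which $W(c)=p-1$, separate cleanly from the remaining $c$, for which Weil's bound gives $|W(c)|\ll\sqrt p$. This yields the decomposition
\[
|S_p(n;\chi)|^{2}=(p-1)\bigl(1+\chi(a_1)+\chi(a_2)\bigr)+E(\chi),\qquad E(\chi)=\sum_{c\neq 1,a_1,a_2}\chi(c)\,W(c).
\]
Squaring, and using $a_1a_2\equiv 1\pmod p$ so that $\chi(a_1)\chi(a_2)=1$, the leading block expands as
\[
\bigl(1+\chi(a_1)+\chi(a_2)\bigr)^{2}=3+2\chi(a_1)+2\chi(a_2)+\chi(a_1^{2})+\chi(a_2^{2}).
\]

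Multiplying $|S_p(n;\chi)|^{4}$ by $|L(1,\chi)|$ and summing over $\chi\neq\chi_0$, the leading contribution thus reduces to a combination of the five weighted sums $\sum_{\chi\neq\chi_0}\chi(h)\,|L(1,\chi)|$ with $h\in\{1,a_1,a_2,a_1^{2},a_2^{2}\}$ and weights $3,2,2,1,1$. For each $h$ I would evaluate this sum exactly as in Theorem \ref{mt3}: write $|L(1,\chi)|=L(1,\chi)^{1/2}\,\overline{L(1,\chi)^{1/2}}$, use the square-root Dirichlet series $L(1,\chi)^{1/2}=\sum_{m\geq 1}r(m)\chi(m)/m$ (consistent with $\sum_m r(m)/m^{s}=\zeta(s)^{1/2}$, which fixes the very $r(n)$ in the definition of $C$ and $C_t$), and truncate both series at $m\leq p$ with a Pólya–Vinogradov error. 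Character orthogonality then forces the congruence $u\equiv hv\pmod p$, whose diagonal produces the Euler product $C$ multiplied by the arithmetic factor $C_h$, while the off-diagonal and the principal-character piece are absorbed into $O(p^{1/2}\ln p)$. Collecting the five pieces, multiplying by the two factors $(p-1)$ from the leading block and one more $(p-1)$ from the orthogonality relation, produces exactly $C\bigl(3+2(C_{a_1}+C_{\overline{a}_1})+C_{a_1^{2}}+C_{\overline{a}_1^{2}}\bigr)p^{3}$.

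The hard part will be the error analysis: one must show that the cross term $2(p-1)(1+\chi(a_1)+\chi(a_2))E(\chi)$ and the square $E(\chi)^{2}$, once weighted by $|L(1,\chi)|$ and summed, stay within $O(p^{5/2}\ln p)$. A direct Cauchy–Schwarz is far too wasteful — using $\sum_{\chi}|E(\chi)|^{2}\ll p^{3}$ together with $\sum_{\chi\neq\chi_0}|L(1,\chi)|^{2}\ll p$ gives only $O(p^{2})$ in the second-moment analogue and $O(p^{3})$ here — so instead I would expand $|L|$ by its Dirichlet series and apply orthogonality directly, reducing these terms to weighted averages of the cubic Weil sums $W(c)$ against the coefficients $r(u)r(v)/(uv)$, where the genuine cancellation lives.

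The decisive and most delicate point is the near-diagonal part of $E(\chi)^{2}$, where the product $cc'$ of the two summation variables is itself a cube root of unity. Since $W(c)$ depends only on $c^{3}$ and $W(c^{-1})=W(c)$, these terms are controlled by the mean value $\sum_{c}W(c)^{2}$ — precisely the quantity responsible for the constant $5$ in Theorem \ref{mt2} — and the whole success of the argument rests on showing that this correlation feeds into the error at the level $O(p^{5/2}\ln p)$ rather than perturbing the main term. I would therefore treat this estimate as the crux, bounding $\sum_{c}W(c)^{2}$ and its twists by $\chi(h)$ through the complete cubic exponential sums, and only afterwards assemble the final asymptotic.
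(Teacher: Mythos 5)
Your outline follows the same route as the paper: split $|S_p(n;\chi)|^2$ into the cube-root block plus $E(\chi)=\sum_{c}\chi(c)W(c)$, square, extract the main term from the five twisted sums $\sum_{\chi\neq\chi_0}\chi(h)|L(1,\chi)|=C\cdot C_h\cdot p+O(p^{1/2}\ln p)$ (this is exactly Lemma \ref{lem-5}, which your $\zeta(s)^{1/2}$-coefficient argument re-derives), and push the cross term and $E(\chi)^2$ into the error. The cross term is indeed harmless: Weil's bound $|W(c)|\ll\sqrt p$ together with Lemma \ref{lem-2} gives $O(p^{5/2}\ln p)$ for it, as in the paper's treatment of $B_2$.

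The gap is at the step you yourself call decisive, and it is not a gap that can be closed: the near-diagonal of $E(\chi)^2$ cannot be absorbed into the error, because nothing in it oscillates with $\chi$. Group $\sum_{\chi\neq\chi_0}E(\chi)^2|L(1,\chi)|$ according to $t\equiv cc'\pmod p$. For $t\in\{1,a_1,a_2\}$ one has $W(c')=W(t\overline{c})=W(\overline{c})=W(c)$ (as you observe, $W$ depends only on the cube of its argument and is real-valued), so these terms contribute exactly
\begin{align*}
\Bigl(\sum_{\substack{c=2\\ c\neq a_1,a_2}}^{p-1}W(c)^2\Bigr)\cdot\sum_{\chi\neq\chi_0}\bigl(1+\chi(a_1)+\chi(a_2)\bigr)\,|L(1,\chi)| .
\end{align*}
The first factor equals $2p^2+O(p^{3/2})$ --- the very computation of $A_3'$ that produces the constant $5$ in Theorem \ref{mt2} --- and it is independent of $\chi$ and nonnegative term by term, so no cancellation is available; the second factor is $C(1+C_{a_1}+C_{\overline{a}_1})\,p+O(p^{1/2}\ln p)$ by Lemmas \ref{lem-4} and \ref{lem-5}. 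Hence the near-diagonal contributes $2C(1+C_{a_1}+C_{\overline{a}_1})p^3+O(p^{5/2}\ln p)$, which is of main-term size. The remaining classes $t$ with $t^3\not\equiv 1$ are $O(p^{5/2}\ln p)$ by Weil plus Lemma \ref{lem-2}, as you intend, so an honest execution of your plan yields
\begin{align*}
\sum_{\chi\neq\chi_0}|S_p(n;\chi)|^{4}\cdot|L(1,\chi)|
=5C\bigl(1+C_{a_1}+C_{\overline{a}_1}\bigr)p^3+O(p^{5/2}\ln p),
\end{align*}
whereas the statement you were asked to prove has main term $C\bigl(3+2(C_{a_1}+C_{\overline{a}_1})+C_{a_1^2}+C_{\overline{a}_1^2}\bigr)p^3=3C(1+C_{a_1}+C_{\overline{a}_1})p^3$ (note $a_1^2\equiv\overline{a}_1$ and $\overline{a}_1^{\,2}\equiv a_1$). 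The difference is precisely the near-diagonal contribution. For comparison, the paper's own proof loses these terms at the change of variables in $B_3$: after replacing $a$ by $a\overline{c}$ it keeps the exclusions $a\neq 1,a_1,a_2$ for the new variable, and its Weil-bound estimate $O(p^{3/2})$ for the inner sum over $c$ is valid only when $a^3\not\equiv 1$. So your instinct about where the danger lies was exactly right, but the hoped-for conclusion --- that this correlation feeds into the error term --- is false, and neither your outline nor the paper's argument can reach the stated main term.
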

\begin{remark}
Notice that for $p\equiv 2 \bmod 3$, the map 
\begin{align*}
f:\mathbb{Z}^{\times}_p\rightarrow\mathbb{Z}_p^{\times} ~\text{defined by}~  x\mapsto x^3.
\end{align*}
is a bijection. Hence it is sufficient to do the above study only for prime $p\equiv 1 \bmod 3.$
\end{remark}
\section{notations and preliminaries}
\begin{itemize}
\item Let $\chi_0$ denotes the principal character modulo $p$, defined as $\chi_0(a)=1$ for $(a,p)=1$ and $\chi(a)=0$ otherwise.

\item We denote the classical Gauss sum as $g(\chi)$ for a character $\chi$ and it is defined as
\begin{align*}
g(\chi)=\sum_{a=1}^{p}\chi(a)e(\frac{a}{p}).
\end{align*} 
\item For characters $\chi$ and $\psi$, $J(\chi,\psi)$ denotes the classical Jacobi sum which is defined as 
\begin{align*}
J(\chi,\psi)=\sum_{a=1}^{p}\chi(a)\psi(a-1).
\end{align*}
\item For every prime $p\equiv 1 \bmod 3$, let $\lambda$ be the cubic residue symbol, defined as 
\begin{align*}
\lambda(a)=\left(\frac{a}{p}\right)_3\equiv a^{(p-1)/3} \bmod p.
\end{align*}
\item The notation $f=O(g)$ or $f\ll g$ implies $f\leq c\cdot g$, where the $c$ is a constant which sometimes may depends upon some $\epsilon$.
\item$\overline{x}$ denotes the multiplicative inverse of $x$ modulo some prime $p$.

\end{itemize}
We also know that for $\chi$ and $\psi$ non-trivial one can get
\begin{align*}
|g(\chi)|=|J(\chi,\psi)|=\sqrt{p}.
\end{align*}. 
\par Let $N=p^{3/2}$ and $\chi$ be a non-principal multiplicative character modulo $p$. Then from \cite[equation (1)]{zhang} we get 
\begin{align}\label{l(1,X)}
\sum_{\chi\neq \chi_0}|L(1,\chi)|&=\sum_{\chi\neq \chi_0}\left|\sum_{n\leq N}\frac{\chi(n)}{n}\right|+O(\ln p).
\end{align}
 From \cite[equation (2)]{zhang} one can directly have
\begin{align}\label{g2}
\left[\sum_{n\leq N}\frac{\chi(n)r(n)}{n}\right]^2=\sum_{n\leq N}\frac{\chi(n)}{n}+\sum_{N<n\leq N^2}\frac{\chi(n)r(n,N)}{n},
\end{align}
where $r(n,N)=\sum_{d|n, d,n/d\leq N}r(d)r\left(\frac{n}{d}\right)$.
\par $\bullet$ From \eqref{constant-c}, we have
\begin{align*}
C=\sum_{n=1}^{\infty}\frac{r(n)^2}{n^2}.
\end{align*}

\section{Some lemmas}
 First we prove a few lemmas which play a very important part in the proof of our main theorems.
\begin{lemma}\label{lem-1}
Let $p$ be an odd prime such that $p\equiv 1\bmod 3$ and $n$ be any integer with $\gcd(n,p)=1$. Then for any non-principal character  $\chi$ modulo $p$ the following identity holds
 \begin{equation*}
  |S_p(n;\chi)|^2=p(1+\chi(a_1)+\chi(a_2))+\sum_{\substack{a=2\\a\neq a_1, a_2}}^{p-1}\chi(a)\left(g(\lambda)\lambda(\overline{x})+g(\lambda^{2})\lambda^2(\overline{x})\right).
   \end{equation*}
 If $\chi_0$ is the principal character modulo $p$, then
 \begin{align*}
 |S_p(n;\chi_0)|^2&=3-\sum_{i=1}^{2}\left(g(\lambda)\overline{\lambda}(n(a_i-1))+g(\lambda^2)\overline{\lambda}^2(n(a_i-1))\right)\left(1+\lambda(a_i)+\lambda(a_i)^{2}\right)\\&+\overline{\lambda}(n)g(\lambda)\left(J(\overline{\lambda},\lambda)+J(\overline{\lambda},\lambda^2)\right)+\overline{\lambda}^2(n)g(\lambda^2)\left(J(\overline{\lambda}^2,\lambda)+J(\overline{\lambda}^2,\lambda^2)\right).
 \end{align*}
\end{lemma}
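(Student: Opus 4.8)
The plan is to compute $|S_p(n;\chi)|^2$ directly from the definition $S_p(n;\chi)=\sum_a\chi(a)e(na^3/p)$ by opening the square and evaluating the resulting complete cubic exponential sum, carrying the principal and non-principal cases in parallel and splitting them only at the final bookkeeping step. I would first write
\[
|S_p(n;\chi)|^2=\sum_{a=1}^{p-1}\sum_{b=1}^{p-1}\chi(a)\overline{\chi}(b)\,e\!\left(\frac{n(a^3-b^3)}{p}\right)
\]
and make the substitution $a\equiv bc\pmod p$ (valid since $(b,p)=1$), which factors the phase as $nb^3(c^3-1)$ and collapses $\chi(a)\overline{\chi}(b)=\chi(c)$. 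After swapping the order of summation this is $\sum_{c}\chi(c)\,T(c)$ with inner sum $T(c)=\sum_{b=1}^{p-1}e(nb^3(c^3-1)/p)$. The central computation is to evaluate $T(c)$ through the cubic residue symbol: writing the number of cube roots of a nonzero residue $t$ as $1+\lambda(t)+\lambda^2(t)$ and recognising the two nontrivial pieces as Gauss sums (via $\sum_t\lambda(t)e(mt/p)=\overline{\lambda}(m)g(\lambda)$ for $m\not\equiv0$), one gets $T(c)=p-1$ when $c^3\equiv1$, i.e. $c\in\{1,a_1,a_2\}$, and otherwise
\[
T(c)=-1+\overline{\lambda}\big(n(c^3-1)\big)g(\lambda)+\overline{\lambda}^2\big(n(c^3-1)\big)g(\lambda^2).
\]

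For the non-principal case the finish is clean. The three diagonal terms contribute $(p-1)(1+\chi(a_1)+\chi(a_2))$, while the constant $-1$ attached to every off-diagonal $c$ can be completed to the full sum $-\sum_{c}\chi(c)=0$ after restoring the diagonal $\chi$-values; since $\sum_{c=1}^{p-1}\chi(c)=0$ these combine to give exactly $p(1+\chi(a_1)+\chi(a_2))$. The Gauss-sum tail is then precisely the displayed $\sum_{a\neq1,a_1,a_2}\chi(a)(g(\lambda)\overline{\lambda}(\cdots)+g(\lambda^2)\overline{\lambda}^2(\cdots))$ after relabelling $x=n(a^3-1)$, which proves the first identity.

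The genuinely delicate part is the principal character, and this is where I expect the main obstacle. Here $\sum_c\chi_0(c)=p-1\neq0$, so the cancellation that produced the clean answer no longer occurs: the diagonal gives $3(p-1)$, the off-diagonal $-1$'s give $-(p-4)$, and, crucially, the Gauss-sum tails become the \emph{complete} character sums $g(\lambda)\,\overline{\lambda}(n)\sum_{c}\overline{\lambda}(c^3-1)$ together with its $\lambda^2$ analogue. I would evaluate $\sum_{c=1}^{p-1}\overline{\lambda}(c^3-1)$ by once more substituting $c^3\mapsto t$ with multiplicity $1+\lambda(t)+\lambda^2(t)$ and reading the three resulting sums as Jacobi sums $J(\overline{\lambda},\lambda),J(\overline{\lambda},\lambda^2)$ (and $J(\overline{\lambda}^2,\lambda),J(\overline{\lambda}^2,\lambda^2)$ for the other tail); the values at the cube roots of unity, where $c^3-1\equiv0$, have to be inserted separately and are exactly what produce the correction term $\sum_{i=1}^2(g(\lambda)\overline{\lambda}(n(a_i-1))+g(\lambda^2)\overline{\lambda}^2(n(a_i-1)))(1+\lambda(a_i)+\lambda^2(a_i))$. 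Throughout I would use $g(\lambda)g(\lambda^2)=\lambda(-1)p=p$ and $g(\chi)g(\psi)=J(\chi,\psi)g(\chi\psi)$ to keep every term a pure Gauss sum, a Gauss–Jacobi product, or a constant. The obstacle is precisely this bookkeeping: one must track the order-$p$ contributions with care (the Gauss–Jacobi products are of size $p$, not $\sqrt p$), correctly handle the degenerate Jacobi sums via $J(\chi,\chi^{-1})=-\chi(-1)$, and ensure the boundary corrections at $c=1,a_1,a_2$ are neither double-counted nor dropped, since it is exactly these terms that distinguish the principal-character formula from the non-principal one.
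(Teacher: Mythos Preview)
Your approach is correct and essentially identical to the paper's: open the square, substitute $a\equiv bc$ to isolate $\sum_c\chi(c)\sum_b e(nb^3(c^3-1)/p)$, evaluate the inner cubic sum via the multiplicity $1+\lambda+\lambda^2$ to produce $g(\lambda)\overline\lambda(x_c)+g(\lambda^2)\overline\lambda^2(x_c)$, separate the diagonal $c\in\{1,a_1,a_2\}$, and for $\chi_0$ run the same multiplicity trick on the $c$-sum to obtain the Jacobi sums together with the boundary corrections at $a_1,a_2$. The only cosmetic difference is that the paper extends the $b$-sum to $b=p$ at the outset (absorbing your ``$-1$'' into the complete sum $\sum_{b=1}^p e(b/p)=0$) rather than carrying it separately.
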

 
\begin{proof}
First, let $\chi$ be non-principal. We have
\begin{align*}
|S_p(n;\chi)|^2&=\sum_{a=1}^{p-1}\sum_{b=1}^{p-1}\chi(a)\overline{\chi}(b)e\left(\frac{na^3-nb^3}{p}\right)\\
&=\sum_{a=1}^{p-1}\sum_{b=1}^{p-1}\chi(a)e\left(\frac{nb^3(a^3-1)}{p}\right)\\
&=\sum_{a=1}^{p-1}\chi(a)\sum_{b=1}^{p}e\left(\frac{nb^3(a^3-1)}{p}\right)-\sum_{a=1}^{p-1}\chi(a)\\
&=p(1+\chi(a_1)+\chi(a_2))+\sum_{\substack{a=2\\a\neq a_1, a_2}}^{p-1}\chi(a)\sum_{b=1}^{p}e\left(\frac{nb^3(a^3-1)}{p}\right),
\end{align*}
where $a_1$ and $a_2$ are same as in Section 2.
Let $n(a^3-1)=x_a$, then the above expression becomes 
\begin{align*}
|S_p(n;\chi)|^2=p(1+\chi(a_1)+\chi(a_2))+\sum_{\substack{a=2\\a\neq  a_1, a_2}}^{p-1}\chi(a)\sum_{b=1}^{p}e\left(\frac{x_ab^3}{p}\right).
\end{align*}

Notice that inverse of $x_a$ exists for $a\neq 1,a_1,a_2$. Using cubic residue symbol $\lambda$, we can re-write the above expression as 
\begin{align*}
|S_p(n;\chi)|^2&=p(1+\chi(a_1)+\chi(a_2))+\sum_{\substack{a=2\\a\neq  a_1,a_2}}^{p-1}\chi(a)\sum_{b=1}^{p}e\left(\frac{x_ab}{p}\right)\left(1+\lambda(b)+\lambda(b)^{2}\right)\\
&=p(1+\chi(a_1)+\chi(a_2))+\sum_{\substack{a=2\\a\neq a_1, a_2}}^{p-1}\chi(a)\sum_{b=1}^{p}e\left(\frac{b}{p}\right)\left(1+\lambda(b\overline{x_a})+\lambda(b\overline{x_a})^{2}\right)\\
&=p(1+\chi(a_1)+\chi(a_2))\\&\qquad+\sum_{\substack{a=2\\a\neq  a_1,a_2}}^{p-1}\chi(a)\left(\sum_{b=1}^{p}e\left(\frac{b}{p}\right)+\lambda(\overline{x_a})\sum_{b=1}^{p}e\left(\frac{b}{p}\right)\lambda(b)+\lambda(\overline{x_a})^{2}\sum_{b=1}^{p}e\left(\frac{b}{p}\right)\lambda(b)^{2}\right)\\
&=p(1+\chi(a_1)+\chi(a_2))+\sum_{\substack{a=2\\a\neq a_1,a_2}}^{p-1}\chi(a)\left(g(\lambda)\lambda(\overline{x_a})+g(\lambda^{2})\lambda^2(\overline{x_a})\right),
\end{align*} 
where $g$ is the classical Gauss sum as defines in Section 2.
\par Now let $\chi=\chi_0$. Then 
\begin{align*}
|S_p(n;\chi_0)|^2=\sum_{a=1}^{p-1}\sum_{b=1}^{p-1}e\left(\frac{na^3-nb^3}{p}\right)
&=\sum_{a=1}^{p-1}\sum_{b=1}^{p-1}e\left(\frac{nb^3(a^3-1)}{p}\right)\\
&=3+\sum_{\substack{a=2\\a\neq a_1,a_2}}^{p-1}\sum_{b=1}^{p}e\left(\frac{nb^3(a^3-1)}{p}\right).
\end{align*}
Then similarly using cubic residue symbol, one can have 
\begin{align*}
|S_p(n;\chi_0)|^2&=3+\sum_{\substack{a=2\\a\neq  a_1,a_2}}^{p-1}\sum_{b=1}^{p}e\left(\frac{bx_a}{p}\right)\left(1+\lambda(b)+\lambda(b)^{2}\right)\\
&=3+\sum_{\substack{a=2\\a\neq a_1,a_2}}^{p-1}\sum_{b=1}^{p}e\left(\frac{bx_a}{p}\right)\left(1+\lambda(b)+\lambda(b)^{2}\right)\\
&=3+\sum_{\substack{a=2\\a\neq a_1,a_2}}^{p-1}\left(g(\lambda)\lambda(\overline{x_a})+g(\lambda^{2})\lambda^2(\overline{x_a})\right)\\
&=3+\sum_{\substack{a=2\\a\neq a_1,a_2}}^{p-1}\left(g(\lambda)\overline{\lambda}(n(a-1))+g(\lambda^{2}){\overline{\lambda}}^2(n(a-1))\right)\left(1+\lambda(a)+\lambda(a)^{2}\right)\\
&=3-\sum_{i=1}^{2}\left(g(\lambda)\overline{\lambda}(n(a_i-1))+g(\lambda^2)\overline{\lambda}^2(n(a_i-1))\right)\left(1+\lambda(a_i)+\lambda(a_i)^{2}\right)\\&+\left(\overline{\lambda}(n)g(\lambda)J(\overline{\lambda},\lambda)+\overline{\lambda}(n)g(\lambda)J(\overline{\lambda},\lambda^2)+\overline{\lambda}^2(n)g(\lambda^2)J(\overline{\lambda}^2,\lambda)\right.\\&\hspace{7cm}\left.+\overline{\lambda}^2(n)g(\lambda^2)J(\overline{\lambda}^2,\lambda^2)\right).
\end{align*}
This completes the proof the lemma.
\end{proof}
The next two lemmas are due to Zhang \cite{zhang}, which play an integral part in the proofs of our main theorems.

\begin{lemma}[Lemma 1, \cite{zhang}]\label{lem-2}For any odd prime $p$, we have the estimate

\begin{align*}
\sum_{a=1}^{p-1}\left|\sum_{\chi\neq \chi_0}\chi(a)|L(1,\chi)|\right|=O(p\cdot\ln p).
\end{align*}
\end{lemma}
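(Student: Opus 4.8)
The plan is to exploit the hidden ``diagonal'' concentration behind $|L(1,\chi)|$ rather than bounding crudely: a direct Cauchy--Schwarz combined with the mean value $\sum_{\chi\ne\chi_0}|L(1,\chi)|^2\ll p$ only yields $O(p^{3/2})$, because it ignores that the inner sum over $\chi$ is large only for $a$ close to $1$ multiplicatively. First I would pass from $L(1,\chi)$ to a truncated sum pointwise: by partial summation and the P\'olya--Vinogradov bound $|\sum_{n\le x}\chi(n)|\ll\sqrt p\,\ln p$, every $\chi\ne\chi_0$ satisfies $|L(1,\chi)|=\bigl|\sum_{n\le N}\chi(n)/n\bigr|+O(\ln p/p)$ with $N=p^{3/2}$, the pointwise form of \eqref{l(1,X)}. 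Since the error, summed over the $p-2$ characters and then over $a$, contributes only $O(p\ln p)$, it suffices to estimate $\sum_{a}\bigl|\sum_{\chi\ne\chi_0}\chi(a)\,|\sum_{n\le N}\chi(n)/n|\bigr|$.

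Next I would linearise the modulus using \eqref{g2}. Writing $A(\chi)=\sum_{n\le N}\chi(n)r(n)/n$ and $B(\chi)=\sum_{N<n\le N^2}\chi(n)r(n,N)/n$, identity \eqref{g2} gives $\sum_{n\le N}\chi(n)/n=A(\chi)^2-B(\chi)$, whence $\bigl|\sum_{n\le N}\chi(n)/n\bigr|=|A(\chi)|^2+\delta(\chi)$ with $|\delta(\chi)|\le|B(\chi)|$ by the reverse triangle inequality and $|A(\chi)^2|=|A(\chi)|^2$. For the main term I would expand $|A(\chi)|^2=A(\chi)\overline{A(\chi)}$ and apply orthogonality, $\sum_{\chi\ne\chi_0}\chi(am\overline n)=(p-1)\,[\,am\equiv n \bmod p\,]-1$ for $p\nmid mn$, obtaining $\sum_{\chi\ne\chi_0}\chi(a)|A(\chi)|^2=(p-1)\sum_{\substack{m,n\le N\\ am\equiv n\,(p)}}\frac{r(m)r(n)}{mn}+O(\ln p)$, where the $O(\ln p)$ comes from the $-1$ together with $\sum_{n\le N}r(n)/n\ll\sqrt{\ln p}$ (coefficients of $\zeta(s)^{1/2}$). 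Summing the absolute value over $a$ and using that each coprime pair $(m,n)$ admits exactly one $a$ with $am\equiv n$, the double sum over $(m,n)$ collapses to $\bigl(\sum_{n\le N}r(n)/n\bigr)^2\ll\ln p$, so the main term is $O(p\ln p)$.

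The delicate point is the tail $B(\chi)$. Here I would use Cauchy--Schwarz in the form $\sum_{a}\bigl|\sum_{\chi\ne\chi_0}\chi(a)\delta(\chi)\bigr|\le (p-1)\bigl(\sum_{\chi}|\delta(\chi)|^2\bigr)^{1/2}$, which closes the estimate provided $\sum_{\chi}|B(\chi)|^2\ll\ln^2 p$. By orthogonality $\sum_{\chi}|B(\chi)|^2=(p-1)\sum_{c\bmod p}S_c^2$ with $S_c=\sum_{N<m\le N^2,\,m\equiv c\,(p)}r(m,N)/m$. The crucial input is that $r(m,N)\le (r*r)(m)=1$: indeed $r$ is designed so that its Dirichlet series is $\zeta(s)^{1/2}$, hence $r*r$ is the constant function $\mathbf 1$, and $r(m,N)$ is a subsum of nonnegative terms of this convolution; in particular $\sum_{N<m\le N^2}r(m,N)/m\ll\ln p$. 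Splitting $\sum_c S_c^2$ into its diagonal part (which is $\ll N^{-1}$) and its off-diagonal part, in which $m\equiv n\,(p)$ forces $|m-n|\ge p$, the off-diagonal is $\ll p^{-1}\ln^2 p$ because each residue class modulo $p$ receives mass $\ll\ln p/p$; thus $\sum_c S_c^2\ll\ln^2 p/p$ and $\sum_\chi|B(\chi)|^2\ll\ln^2 p$, as required.

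I expect the main obstacle to be exactly this mean value estimate for $B(\chi)$: one must show that the tail coefficients $r(m,N)$ spread evenly over residue classes modulo $p$, so that no single class carries more than $O(\ln p/p)$ of the total mass. The boundedness $r(m,N)\le 1$ inherited from $r*r=\mathbf 1$ is what makes this quantitatively sharp and removes any $p^{\varepsilon}$ loss; a mere pointwise divisor bound $r(m,N)\ll m^{\varepsilon}$ would only yield $O(p^{1+\varepsilon}\ln p)$, just short of the claim.
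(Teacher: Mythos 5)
The paper never proves this statement: it is imported wholesale as Lemma 1 of Zhang \cite{zhang}, so there is no internal proof to compare yours against. Your argument is correct, and it is essentially a reconstruction of the machinery that Zhang's paper (and this paper, in its proofs of Lemmas \ref{lem-4} and \ref{lem-5}) runs on: the truncation \eqref{l(1,X)} with $N=p^{3/2}$, the squaring identity \eqref{g2} to linearise $|L(1,\chi)|$ into $|A(\chi)|^2$ plus a tail, orthogonality for the main term, and Cauchy--Schwarz in the form $\sum_a\bigl|\sum_{\chi\ne\chi_0}\chi(a)\delta(\chi)\bigr|\le(p-1)\bigl(\sum_\chi|\delta(\chi)|^2\bigr)^{1/2}$ for the remainder. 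The individual steps all check out: the pointwise error $O(\ln p/p)$ from P\'olya--Vinogradov sums over $a$ and $\chi$ to $O(p\ln p)$; the reverse triangle inequality legitimately replaces $|A(\chi)^2-B(\chi)|$ by $|A(\chi)|^2+\delta(\chi)$ with $|\delta(\chi)|\le|B(\chi)|$; the main term collapses over $a$ to $(p-1)\bigl(\sum_{n\le N}r(n)/n\bigr)^2$, and the bound $\sum_{n\le N}r(n)/n\ll\sqrt{\ln p}$ (a consequence of $r*r=\mathbf{1}$, which you correctly flag as necessary --- the trivial bound $r(n)\le1$ would only give $\ln p$ and hence $p\ln^2p$) makes this $O(p\ln p)$; and the tail estimate $\sum_\chi|B(\chi)|^2\ll\ln^2p$ follows from $r(m,N)\le(r*r)(m)=1$ plus the fact that each residue class mod $p$ carries $\sum_{N<m\le N^2,\,m\equiv c\,(p)}1/m\ll\ln p/p$ once $N\ge p$. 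It is worth noting that this last mean-square bound is exactly the estimate the paper itself invokes without justification in the proof of Lemma \ref{lem-4}, where the term $O\bigl(p^{1/2}\bigl(\sum_{\chi\ne\chi_0}\bigl|\sum_{N<n\le N^2}\chi(n)r(n,N)/n\bigr|^2\bigr)^{1/2}\bigr)$ is asserted to be $O(p^{1/2}\ln p)$; your writeup supplies the missing argument.
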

\begin{lemma}[Lemma 2, \cite{zhang}]\label{lem-3}
For any odd prime $p$, we have the asymptotic formula
\begin{align*}
\sum_{{\substack{\chi(-1)=1\\\chi\neq \chi_0}}}|L(1,\chi)|=\frac{1}{2}\cdot C\cdot p+O(p^{1/2}\cdot\ln p),
\end{align*}
where $C$ is the constant defined in Section 2.
\end{lemma}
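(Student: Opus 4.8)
The plan is to linearise the modulus $|L(1,\chi)|$ by means of the factorisation identity \eqref{g2}, which reflects the formal relation $L(s,\chi)^{1/2}=\sum_n r(n)\chi(n)n^{-s}$, and then to exploit orthogonality of the even characters. The reduction leading to \eqref{l(1,X)} proceeds character by character, so it restricts verbatim to the even characters; with $N=p^{3/2}$ this reduces the claim to evaluating $\sum_{\chi(-1)=1,\,\chi\neq\chi_0}\big|\sum_{n\leq N}\chi(n)/n\big|$ up to $O(\ln p)$.

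Next I would set $A_\chi=\sum_{n\leq N}\frac{r(n)\chi(n)}{n}$ and $B_\chi=\sum_{N<n\leq N^2}\frac{r(n,N)\chi(n)}{n}$, so that \eqref{g2} reads $A_\chi^2=\sum_{n\leq N}\frac{\chi(n)}{n}+B_\chi$. The crucial manoeuvre is to take moduli: since $|A_\chi^2-B_\chi|=|A_\chi|^2+O(|B_\chi|)$ by the triangle inequality (using $|A_\chi^2|=|A_\chi|^2$), one obtains
\begin{align*}
\left|\sum_{n\leq N}\frac{\chi(n)}{n}\right|=\left|\sum_{n\leq N}\frac{r(n)\chi(n)}{n}\right|^2+O\left(\left|\sum_{N<n\leq N^2}\frac{r(n,N)\chi(n)}{n}\right|\right).
\end{align*}
This replaces a bare modulus, which resists orthogonality, by a squared modulus, which opens up perfectly. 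Summing the main term over even $\chi\neq\chi_0$ and expanding gives $\sum_{m,n\le N}\frac{r(m)r(n)}{mn}\sum_{\chi(-1)=1,\,\chi\neq\chi_0}\chi(m)\overline{\chi}(n)$. By the even-character orthogonality relation $\sum_{\chi(-1)=1}\chi(a)=\frac{p-1}{2}$ when $a\equiv\pm1\pmod p$ and $0$ otherwise, the inner sum equals $\frac{p-1}{2}\big(\mathbf 1_{m\equiv n}+\mathbf 1_{m\equiv-n}\big)-1$ for $(mn,p)=1$. The diagonal $m=n$ yields the main term $\frac{p-1}{2}\sum_{n\le N,\,(n,p)=1}\frac{r(n)^2}{n^2}$, which by $C=\sum_{n\ge1}r(n)^2/n^2$ and the rapid convergence of this series equals $\frac12\,Cp+O(1)$.

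It then remains to show every error term is $O(p^{1/2}\ln p)$, and this is where the real work lies. The off-diagonal congruences $m\equiv\pm n\pmod p$ with $m\neq n$ are sparse: using $r(n)\le1$ and counting pairs with $m\pm n\in p\mathbb Z$ and $m,n\le p^{3/2}$, these sums are $O(\ln^2 p)$, and the removed principal-character term contributes $\big(\sum_{n\le N,\,(n,p)=1}r(n)/n\big)^2=O(\ln^2 p)$. For the tail I would apply Cauchy–Schwarz over the $(p-1)/2$ even characters followed by a second orthogonality step, reducing $\sum_\chi|B_\chi|$ to the diagonal $\frac{p-1}{2}\sum_{N<n\le N^2}\frac{r(n,N)^2}{n^2}$; since $r(n,N)\le\tau(n)\ll n^\varepsilon$, this diagonal is $\ll p\cdot N^{-1+\varepsilon}$, so Cauchy–Schwarz gives $O(p^{1/4+\varepsilon})$.

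The most delicate point is maintaining uniform control of all these cross and tail contributions at the specific scale $N=p^{3/2}$, and checking that the convergence of $\sum_n r(n)^2/n^2$ to $C$ is fast enough that truncation at $N$ and the restriction $(n,p)=1$ each cost only a negligible amount. Once these estimates are assembled, every error is comfortably within $O(p^{1/2}\ln p)$, and combining them with the diagonal main term $\frac12\,Cp$ yields the asserted asymptotic formula.
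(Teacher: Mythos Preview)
The paper does not supply its own proof of this lemma; it is quoted verbatim as Lemma~2 of Zhang \cite{zhang}. The closest in-paper analogue is the proof of Lemma~\ref{lem-4}, which treats the full sum over all $\chi\neq\chi_0$ by exactly the mechanism you describe: truncate via \eqref{l(1,X)}, replace $|\sum_{n\le N}\chi(n)/n|$ by $|\sum_{n\le N}r(n)\chi(n)/n|^2$ using \eqref{g2}, open the square by orthogonality to extract the diagonal $\sum_n r(n)^2/n^2$, and bound the $B_\chi$-tail by Cauchy--Schwarz (H\"older) followed by a second orthogonality step. Your proposal is therefore the same argument, correctly specialised to the even characters: the only new feature is that the orthogonality relation $\sum_{\chi(-1)=1}\chi(a)=\tfrac{p-1}{2}\,\mathbf 1_{a\equiv\pm1}$ now produces an additional near-diagonal $m\equiv -n\pmod p$, which you dispose of just as the $m\equiv n$, $m\neq n$ terms. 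This is exactly how Zhang's original proof proceeds, so your approach is both correct and essentially identical to the intended one.
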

In the next lemma we extend the result in the last lemma to summation over all characters modulo $p$. The proof follows similar techniques of the proof of \cite[Lemma 2]{zhang}. 
\begin{lemma}\label{lem-4}
For any odd prime $p$, we have the asymptotic formula
\begin{align*}
\sum_{\chi\neq \chi_0}|L(1,\chi)|=C\cdot p+O(p^{1/2}\cdot \ln p),
\end{align*}
where $C$ is the constant defined in Section 2.
\end{lemma}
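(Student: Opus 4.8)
My plan is to mimic the argument behind Lemma~\ref{lem-3}, but to run the character orthogonality over \emph{all} non-principal characters rather than only the even ones, so that the diagonal picks up the full factor $p-1$ and produces $Cp$ in place of $\tfrac12 Cp$. First I would apply \eqref{l(1,X)} to replace $|L(1,\chi)|$ by $\bigl|\sum_{n\le N}\chi(n)/n\bigr|$ with $N=p^{3/2}$, reducing the statement to an estimate for $\sum_{\chi\neq\chi_0}\bigl|\sum_{n\le N}\chi(n)/n\bigr|$ up to $O(\ln p)$. Then I would invoke \eqref{g2} to write $\sum_{n\le N}\chi(n)/n=A(\chi)^2-B(\chi)$, where $A(\chi)=\sum_{n\le N}\chi(n)r(n)/n$ and $B(\chi)=\sum_{N<n\le N^2}\chi(n)r(n,N)/n$. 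Since $|z-w|=|z|+O(|w|)$ and $|A(\chi)^2|=|A(\chi)|^2$, this gives termwise
$$\Bigl|\sum_{n\le N}\frac{\chi(n)}{n}\Bigr|=|A(\chi)|^2+O\bigl(|B(\chi)|\bigr),$$
so after summation the problem splits into a main term $\sum_{\chi\neq\chi_0}|A(\chi)|^2$ and an error $O\bigl(\sum_{\chi\neq\chi_0}|B(\chi)|\bigr)$.

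For the main term I would expand $|A(\chi)|^2$ and use orthogonality: $\sum_{\chi\neq\chi_0}\chi(m)\overline\chi(n)$ equals $p-2$ when $m\equiv n\pmod p$ with $p\nmid mn$, equals $-1$ when $m\not\equiv n\pmod p$ with $p\nmid mn$, and vanishes otherwise. The diagonal $m=n$ yields $(p-1)\sum_{n\le N,\,p\nmid n}r(n)^2/n^2$; since $C=\sum_n r(n)^2/n^2$ with $r(n)\le1$, the tail $\sum_{n>N}$ and the terms with $p\mid n$ are both $O(p^{-1/2})$, so the diagonal equals $Cp+O(1)$. The principal-character correction contributes $-\bigl(\sum_{n\le N,\,p\nmid n}r(n)/n\bigr)^2=O(\ln p)$, using $\sum_{n\le N}r(n)/n\ll(\ln N)^{1/2}$ (which comes from $\sum_n r(n)n^{-s}=\zeta(s)^{1/2}$).

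The delicate piece is the off-diagonal $m\equiv n\pmod p$, $m\neq n$, which carries the large factor $p-1$. Here I would fix a residue class $r$ mod $p$ and estimate $\bigl(\sum_i\frac{1}{r+ip}\bigr)^2-\sum_i\frac{1}{(r+ip)^2}$; the dangerous leading term $1/r^2$ cancels exactly against the within-class diagonal, leaving $O\bigl(r^{-1}\ln p/p\bigr)$ per class, hence $O(\ln^2 p/p)$ after summing over $r$, and so only $O(\ln^2 p)$ after multiplying by $p-1$. This cancellation is the step requiring the most care, and it is the main obstacle: one must see that the apparently $O(p)$-sized near-diagonal contribution actually telescopes down to $O(\ln^2 p)$.

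For the error term I would bound $\sum_{\chi\neq\chi_0}|B(\chi)|$ by Cauchy--Schwarz as $\bigl((p-1)\sum_\chi|B(\chi)|^2\bigr)^{1/2}$, reduce $\sum_\chi|B(\chi)|^2$ to a diagonal plus off-diagonal sum by the same orthogonality, and use the crucial bound $r(n,N)\le(r\ast r)(n)=1$, valid since $\sum_n r(n)n^{-s}=\zeta(s)^{1/2}$ forces the convolution $r\ast r$ to be identically $1$. Because all indices now exceed $N=p^{3/2}$, both the diagonal and off-diagonal are $O(\ln^2 p)$, giving $\sum_\chi|B(\chi)|^2\ll\ln^2 p$ and hence $\sum_{\chi\neq\chi_0}|B(\chi)|\ll p^{1/2}\ln p$. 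Collecting the main term $Cp$ with the errors $O(1)$, $O(\ln p)$, $O(\ln^2 p)$, and $O(p^{1/2}\ln p)$ then yields $\sum_{\chi\neq\chi_0}|L(1,\chi)|=Cp+O(p^{1/2}\ln p)$, as claimed.
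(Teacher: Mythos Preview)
Your proposal is correct and follows essentially the same route as the paper's own proof: apply \eqref{l(1,X)} with $N=p^{3/2}$, use \eqref{g2} to replace $\bigl|\sum_{n\le N}\chi(n)/n\bigr|$ by $|A(\chi)|^2+O(|B(\chi)|)$, evaluate $\sum_{\chi\neq\chi_0}|A(\chi)|^2$ by orthogonality, and control $\sum_{\chi\neq\chi_0}|B(\chi)|$ by Cauchy--Schwarz together with $r(n,N)\le (r\ast r)(n)=1$. The only difference is cosmetic: the paper absorbs the off-diagonal congruence terms $m\equiv n\pmod p$, $m\neq n$, into a citation of \cite[equation~(3)]{zhang}, whereas you sketch the explicit $O(\ln^2 p)$ bound yourself; your cancellation argument is valid (and works with the $r$-weights as well, since $0\le r(n)\le 1$ allows you to majorize by the unweighted sum before invoking the cancellation).
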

\begin{proof}
Take $N=p^{3/2}$. Using \eqref{l(1,X)}, we have
\begin{align*}
\sum_{\chi\neq \chi_0}|L(1,\chi)|&=\sum_{\chi\neq \chi_0}\left|\sum_{n\leq N}\frac{\chi(n)}{n}\right|+O(\ln p)\\
&=\sum_{\chi\neq \chi_0}\left|\sum_{n\leq N}\frac{\chi(n)r(n)}{n}\right|^2+O(\ln p)\\
&\qquad+\sum_{\chi\neq\chi_0}\left(\left|\sum_{n\leq N}\frac{\chi(n)}{n}\right|-\left|\sum_{n\leq N}\frac{\chi(n)r(n)}{n}\right|^2\right).
\end{align*}
This using \eqref{g2}, \cite[equation (3)]{zhang} and H\"older inequality, we get 
\begin{align*}
\sum_{\chi\neq \chi_0}|L(1,\chi)|&=(p-1)\sum_{m\leq N}\sum_{n=  m\bmod p, n\leq N}\frac{r(m)r(n)}{mn}+O(\ln p)\\
&\qquad\qquad+O\left(\sum_{\chi\neq \chi_0}\left|\sum_{N<n\leq N^2}\frac{\chi(n)r(n,N)}{n}\right|\right)\\
&=p\sum_{\substack{n=1\\(n,p)=1}}^{\infty}\frac{r(n)^2}{n^2}\\
&\qquad+O(\ln p)+O\left(p^{1/2}\left(\sum_{\chi\neq \chi_0}\left|\sum_{N<n\leq N^2}\frac{\chi(n)r(n,N)}{n}\right|^2\right)^{1/2}\right)\\
&=p\sum_{n=1}^{\infty}\frac{r(n)^2}{n^2}+O(p^{1/2}\cdot\ln p)\\
&=C\cdot p+O(p^{1/2}\cdot \ln p).
\end{align*}

\end{proof}
Now we take weighted sum with some non-principal Dirichlet character $\chi$ mod $p$. In particular, we prove 
\begin{lemma}\label{lem-5}
For any odd prime $p$, we have the asymptotic formula
\begin{align*}
\sum_{\chi\neq \chi_0}\chi(t)|L(1,\chi)|= C\cdot C_{t}\cdot p+O(p^{1/2}\ln p).
\end{align*}
for any $t\neq 0, 1$; where 
$
C_t=\sum_{s|t}\frac{r(s)r(st)}{s^2t}.
$

\end{lemma}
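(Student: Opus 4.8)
The plan is to follow the proof of Lemma~\ref{lem-4} almost verbatim, carrying the extra factor $\chi(t)$ through each step and isolating where it alters the outcome. Put $N=p^{3/2}$ and $A_\chi=\sum_{n\le N}\frac{\chi(n)r(n)}{n}$. Since $|\chi(t)|\le 1$, weighting \eqref{l(1,X)} by $\chi(t)$ leaves the aggregate error at $O(\ln p)$, so that $\sum_{\chi\ne\chi_0}\chi(t)|L(1,\chi)|=\sum_{\chi\ne\chi_0}\chi(t)\left|\sum_{n\le N}\frac{\chi(n)}{n}\right|+O(\ln p)$. By \eqref{g2} one has $\sum_{n\le N}\frac{\chi(n)}{n}=A_\chi^{2}-\sum_{N<n\le N^2}\frac{\chi(n)r(n,N)}{n}$, and since $|A_\chi^2|=|A_\chi|^2$ the reverse triangle inequality gives $\left|\sum_{n\le N}\frac{\chi(n)}{n}\right|=|A_\chi|^{2}+O\!\left(\left|\sum_{N<n\le N^2}\frac{\chi(n)r(n,N)}{n}\right|\right)$. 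The tail is then handled exactly as in Lemma~\ref{lem-4}: by H\"older's inequality and the mean-square estimate of \cite{zhang} its contribution is $O(p^{1/2}\ln p)$, and the weight $\chi(t)$ is harmless because it is bounded by $1$. This reduces the problem to evaluating $\sum_{\chi\ne\chi_0}\chi(t)|A_\chi|^{2}$.

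For the second step I would expand $|A_\chi|^2=\sum_{m,n\le N}\frac{r(m)r(n)}{mn}\chi(m)\overline{\chi}(n)$, use $\chi(t)\chi(m)=\chi(tm)$, and apply orthogonality through $\sum_{\chi\ne\chi_0}\chi(tm)\overline{\chi}(n)=(p-1)\mathbf{1}[tm\equiv n\bmod p]-\mathbf{1}[(tmn,p)=1]$. The second indicator contributes $\bigl(\sum_{n\le N}r(n)/n\bigr)^2=O(\ln p)$, because $\sum_{n\le N}r(n)/n\ll\sqrt{\ln N}$. Hence the task becomes the evaluation of a congruence sum:
\begin{align*}
\sum_{\chi\ne\chi_0}\chi(t)|A_\chi|^2=(p-1)\sum_{\substack{m,n\le N,\ (mn,p)=1\\ tm\equiv n\bmod p}}\frac{r(m)r(n)}{mn}+O(\ln p).
\end{align*}

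The decisive step, which I expect to be the main obstacle, is to show that this congruence sum equals $C\cdot C_t+O(p^{-1/2}\ln p)$. The dominant pairs should be the genuine diagonal solutions $n=tm$ of the congruence, since any solution with $tm\ne n$ as integers forces $|tm-n|\ge p$, and because $0<r\le1$ such off-diagonal pairs contribute only $O(p^{-1/2}\ln p)$ after multiplication by $p-1$, by the same estimate that controls the off-diagonal in Lemma~\ref{lem-4}. On the diagonal I would write $m=s\nu$, with $s$ collecting the prime powers of $m$ supported on primes dividing $t$ and $\nu=m/s$ coprime to $t$; multiplicativity of $r$ then factors $\frac{r(m)r(tm)}{tm^{2}}$ into a $t$-part $\frac{r(s)r(st)}{s^{2}t}$ and a coprime part $\frac{r(\nu)^2}{\nu^2}$, the former assembling into $C_t=\sum_{s\mid t}\frac{r(s)r(st)}{s^2t}$ and the latter resumming to the Euler product $C=\sum_{\nu}r(\nu)^2/\nu^2$ of \eqref{constant-c}. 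The genuinely delicate points are (i) matching this factorization to the precise closed form $C\cdot C_t$ uniformly in $t$, rather than to the unrestricted diagonal series, and (ii) confirming both the off-diagonal bound and the truncation at $N$ (the diagonal series converges like $\sum m^{-2}$, so its tail is $O(t/N)$, absorbed after the factor $p-1$). Granting these, one gets $(p-1)\bigl(C\cdot C_t+O(p^{-1/2}\ln p)\bigr)=C\cdot C_t\cdot p+O(p^{1/2}\ln p)$, which combined with the reduction of the first paragraph yields the stated asymptotic.
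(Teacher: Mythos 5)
Up to the reduction to the congruence sum $(p-1)\sum_{tm\equiv n \bmod p}\frac{r(m)r(n)}{mn}$, your argument is exactly the paper's (truncation \eqref{l(1,X)}, squaring via \eqref{g2}, H\"older for the tail, orthogonality with the $-\mathbf{1}[(tmn,p)=1]$ correction), and that part is sound. The problem is that the two points you flag as ``delicate'' and then grant are the entire content of the lemma, and neither can be settled in the way you propose. Take the off-diagonal first: the Lemma \ref{lem-4} estimate does \emph{not} transfer. For $t=1$, the conditions $m\equiv n\bmod p$, $m\neq n$, $m,n\le N$ force $\max(m,n)>p$, so every off-diagonal pair satisfies $\frac{r(m)r(n)}{mn}\le\frac{1}{p\min(m,n)}$ and the total is $O(\ln^2p/p)$; that is the mechanism you are importing. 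For general $t$, the condition $|tm-n|\ge p$ only forces $\max(tm,n)>p$, which permits $m\approx p/t$ together with $n=O(1)$. A single such pair contributes $(p-1)\frac{r(m)r(n)}{mn}\asymp t\,r(m)r(n)$, and since every term of the congruence sum is nonnegative, nothing cancels it. Concretely, if $t\mid p+1$ then $(m,n)=\bigl((p+1)/t,\,1\bigr)$ is an admissible off-diagonal pair contributing $\asymp t\,r\bigl((p+1)/t\bigr)$, which for $t$ of size comparable to $p$ (and in Theorems \ref{mt3} and \ref{mt4} the relevant $t=a_1,a_2$ satisfy $a_1+a_2=p-1$, so one of them is $\asymp p$) dwarfs the claimed error $O(p^{1/2}\ln p)$. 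So this step needs a genuinely new argument, or a restriction on $t$; it cannot be waved through ``by the same estimate as Lemma \ref{lem-4}''. (For what it is worth, the paper's own proof never addresses these terms at all: it passes from $\sum_{\chi\neq\chi_0}\chi(t)\bigl|\sum_{n\le N}\chi(n)r(n)/n\bigr|^2$ straight to the diagonal sum.)

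The diagonal matching, your point (i), is not a technicality either --- the identity you hope to reach is false. Done correctly, your factorization $m=s\nu$ makes $s$ range over \emph{all} integers supported on the primes dividing $t$ (i.e.\ $s\mid t^{\infty}$), not over the divisors of $t$, so the diagonal series equals $\bigl(\sum_{s\mid t^{\infty}}\frac{r(s)r(st)}{s^2t}\bigr)\prod_{q\nmid t}\bigl(1+\sum_{\alpha\ge1}\frac{r(q^{\alpha})^2}{q^{2\alpha}}\bigr)$, which differs from $C\cdot C_t$: already for $t=q$ prime the two constants differ by $\frac{r(q)^3}{q^3}+O(q^{-5})=\frac{1}{8q^3}+\cdots$, i.e.\ by $\asymp p/t^{3}$ after multiplication by $p$, which fits inside $O(p^{1/2}\ln p)$ only when $t\gg p^{1/6}$. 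The paper forces the closed form $C\cdot C_t$ by asserting that $(n,t)=s$ and $n=n's$ imply $(n',t)=1$; that assertion is false (take $n=4$, $t=2$), and this is precisely why your correct factorization refuses to assemble into $C_t=\sum_{s\mid t}\frac{r(s)r(st)}{s^2t}$. So an honest completion of your plan produces the constant $\sum_{s\mid t^{\infty}}$ rather than $\sum_{s\mid t}$, and one must either accept the $O(p/t^{3})$ discrepancy (harmless only for large $t$) or reformulate the constant; neither your sketch nor, in fact, the paper's own proof resolves this.
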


\begin{proof}
\begin{align*}
\sum_{\chi\neq \chi_0}|L(1,\chi)|&=\sum_{\chi\neq \chi_0}\chi(t)\left|\sum_{n\leq N}\frac{\chi(n)}{n}\right|+O(\ln p)\\
&=\sum_{\chi\neq \chi_0}\chi(t)\left|\sum_{n\leq N}\frac{\chi(n)r(n)}{n}\right|^2+O(\ln p)\\
&\hspace{3cm}+\sum_{\chi\neq \chi_0}\left(\left|\sum_{n\leq N}\frac{\chi(n)}{n}\right|-\left|\sum_{n\leq N}\frac{\chi(n)r(n)}{n}\right|^2\right)\\
\end{align*}
Let $(n,t)=s$ and we write $n=n's$. Then we have $(n',t)=1$. Hence the first term can be simplified more as 
\begin{align*}
&\sum_{\chi\neq \chi_0}\chi(t)\left|\sum_{n\leq N}\frac{\chi(n)r(n)}{n}\right|^2\\
&=(p-1)\sum_{\substack{n\leq N\\(n,p)=1}}\frac{r(n)r(nt)}{n^2t}\\
&=(p-1)\sum_{\substack{n=1\\(n,p)=1}}^{\infty}\frac{r(n)r(nt)}{n^2t}+O\left(\sum_{\substack{n\geq N}}^{\infty}\frac{r(n)r(nt)}{n^2t}\right)\\
&=(p-1)\sum_{s|t}\sum_{\substack{n=1\\n=n's\\(n',t)=1\\(n',p)=1}}^{\infty}\frac{r(n)r(nt)}{n^2t}+O\left(\ln^2 p\right)\\
&=(p-1)\sum_{s|t}\sum_{\substack{n=1\\n=n's\\(n',t)=1\\(n',p)=1}}^{\infty}\frac{r(n's)r(n'st)}{{n'}^2s^2t}+O\left(\ln^2 p\right)\\
&=(p-1)\sum_{s|t}\frac{r(s)r(st)}{s^2t}\sum_{\substack{(n',p)=1}}^{\infty}\frac{r(n')^2}{{n'}^2}+O\left(\ln^2 p\right)\\
&=(p-1)\sum_{s|t}\frac{r(s)r(st)}{s^2t}\sum_{\substack{n'=1\\(n',p)=1}}^{\infty}\frac{r(n')^2}{{n'}^2}+O\left(\ln^2 p\right)\\
&=C\cdot C_t\cdot p+O(\ln^2 p),
\end{align*}
where 
\begin{align*}
C_t=\sum_{s|t}\frac{r(s)r(st)}{s^2t}.
\end{align*}

\end{proof}
\section{Proof of Theorem \ref{mt1} and \ref{mt2}}
First using Lemma \ref{lem-1}, we have 
\begin{align*}
\sum_{\chi\bmod p}|S_p(\chi)|^2&=\sum_{\chi\neq \chi_0 }|S_p(\chi)|^2+|S_p(\chi_0)|^2\\&=p\sum_{\chi\neq \chi_0}\left(1+\chi(a_1)+\chi(a_2)\right)+\\&\sum_{\substack{a=2\\a\neq a_1,a_2}}^{p-1}\left(g(\lambda)\lambda(\overline{x})+g(\lambda^{2})\lambda(\overline{x})^{2}\right)\left(\sum_{\chi\bmod p}\chi(a)-1\right)\\&+3-\sum_{i=1}^{2}\left(g(\lambda)\overline{\lambda}(n(a_i-1))+g(\lambda^2)\overline{\lambda}^2(n(a_i-1))\right)\left(1+\lambda(a_i)+\lambda(a_i)^{2}\right)\\&+\overline{\lambda}(n)g(\lambda)\left(J(\overline{\lambda},\lambda)+J(\overline{\lambda},\lambda^2)\right)+\overline{\lambda}^2(n)g(\lambda^2)\left(J(\overline{\lambda}^2,\lambda)+J(\overline{\lambda}^2,\lambda^2)\right)
\end{align*}
Hence applying orthogonality properties of characters and using the estimates for classical Gauss sums and Jacobi sums one can directly have 
\begin{align*}
\sum_{\chi\bmod p}|S_p(\chi)|^2=p^2+O(p^{3/2}).
\end{align*}
This completes the proof of Theorem \ref{mt1}.
\par Next using estimates for Gauss and Jacobi sums one can write,
\begin{align}\label{p1}
\sum_{\chi\bmod p}|S_p(\chi)|^4&=\sum_{\chi\neq \chi_0 }|S_p(\chi)|^4+|S_p(\chi_0)|^4\\&=\sum_{\chi\neq \chi_0}\left[p(1+\chi(a_1)+\chi(a_2))+\sum_{\substack{a=2\\a\neq a_1, a_2}}^{p-1}\chi(a)\sum_{b=1}^{p}e\left(\frac{x_ab^3}{p}\right)\right]^2+O(p^2)\notag\\
&=A_1+A_2+A_3+O(p^2),
\end{align}
where 
\begin{align*}
A_1&=p^2\sum_{\chi\neq  \chi_0}(1+\chi(a_1)+\chi(a_2))^2,\\
A_2&=2p\sum_{\chi\neq \chi_0}(1+\chi(a_1)+\chi(a_2))\sum_{\substack{a=2\\a\neq a_1, a_2}}^{p-1}\chi(a)\sum_{b=1}^{p}e\left(\frac{xb^3}{p}\right),\\
A_3&=\sum_{\chi\neq  \chi_0}\left(\sum_{\substack{a=2\\a\neq a_1, a_2}}^{p-1}\chi(a)\sum_{b=1}^{p}e\left(\frac{x_ab^3}{p}\right)\right)^2.
\end{align*}
$A_1$ and $A_2$ are easy to evaluate.  
\begin{align*}
A_1&=p^2\sum_{\chi\neq \chi_0}(1+\chi(a_1)+\chi(a_2))^2\\
&=p^2\sum_{\chi\neq \chi_0}\left(1+2(\chi(a_1)+\chi(a_2))+\chi^2(a_1)+\chi^2(a_2)+2\chi(a_1a_2)\right)\\&=3p^3+O(p^2),
\end{align*}
as $a_1a_2\equiv 1 \mod p$.
Also
\begin{align*}
A_2&=2p\sum_{\chi\neq \chi_0}(1+\chi(a_1)+\chi(a_2))\sum_{\substack{a=2\\a\neq a_1,a_2}}^{p-1}\chi(a)\sum_{b=1}^{p}e\left(\frac{xb^3}{p}\right)\\
&=2p\sum_{\substack{a=2\\a\neq a_1, a_2}}^{p-1}\sum_{b=1}^{p}e\left(\frac{xb^3}{p}\right)\left(\sum_{\chi\bmod p}\chi(a)-1\right)\\
&+2p\sum_{\substack{a=2\\a\neq a_1, a_2}}^{p-1}\sum_{b=1}^{p}e\left(\frac{xb^3}{p}\right)\left(\sum_{\chi\bmod p}\chi(aa_1)-1\right)\\
&+2p\sum_{\substack{a=2\\a\neq a_1, a_2}}^{p-1}\sum_{b=1}^{p}e\left(\frac{xb^3}{p}\right)\left(\sum_{\chi\bmod p}\chi(aa_2)-1\right)\\
&=O(p^{5/2}).
\end{align*}
To get an asymptotic for $A_3$, we write the sum as 
\begin{align*}
A_3&=\sum_{\chi\neq  \chi_0}\sum_{\substack{a=2\\a\neq a_1\\a\neq a_2}}^{p-1}\sum_{\substack{c=2\\c\neq a_1\\c\neq a_2}}^{p-1}\chi(a\overline{c})\sum_{b=1}^{p}\sum_{d=1}^{p}e\left(\frac{x_ab^3-x_cd^3}{p}\right),
\end{align*}
where $x_a=n(a^3-1)$ and $x_c=n(c^3-1)$. Then using the orthogonality property of characters and properties of cubic residue symbol, we have 
\begin{align*}
A_3&=\sum_{\chi\neq  \chi_0}\sum_{\substack{a=2\\a\neq a_1\\a\neq a_2}}^{p-1}\sum_{\substack{c=2\\c\neq a_1\\c\neq a_2}}^{p-1}\chi(a\overline{c})\sum_{b=1}^{p}\sum_{d=1}^{p}e\left(\frac{x_ab^3-x_cd^3}{p}\right)\\
&=\sum_{\substack{a=2\\a\neq a_1\\a\neq a_2}}^{p-1}\sum_{\substack{c=2\\c\neq a_1\\c\neq a_2}}^{p-1}\sum_{b=1}^{p}\sum_{d=1}^{p}e\left(\frac{x_ab^3-x_cd^3}{p}\right)\left(\sum_{\chi\bmod p}\chi(a\overline{c})-1\right)\\
&=A_3^{'}-A_3^{''},
\end{align*}
where
\begin{align*}
A_3^{'}&=(p-1)\sum_{\substack{a=2\\a\neq a_1,a_2}}^{p-1}\sum_{b=1}^{p}\sum_{d=1}^{p}e\left(\frac{xb^3-xd^3}{p}\right)'\\
A_3^{''}&=\sum_{\substack{a=2\\a\neq a_1\\a\neq a_2}}^{p-1}\sum_{\substack{c=2\\c\neq a_1\\c\neq a_2}}^{p-1}\sum_{b=1}^{p}\sum_{d=1}^{p}e\left(\frac{x_ab^3-x_cd^3}{p}\right)
\end{align*}
Now
\begin{align*}
A_3^{'}
&=(p-1)\sum_{\substack{a=2\\a\neq a_1, a_2}}^{p-1}\sum_{b=1}^{p}\sum_{d=1}^{p}e\left(\frac{x_ab^3-x_ad^3}{p}\right)\\
&=(p-1)\sum_{\substack{a=2\\a\neq a_1,a_2}}^{p-1}\sum_{b=1}^{p}\sum_{d=1}^{p}e\left(\frac{b-d}{p}\right)\left(1+\lambda(\overline{x_a})\lambda(b)+\lambda^2(\overline{x_a})\lambda^{2}(b)\right)\\&\qquad\qquad\qquad\qquad\qquad\qquad\qquad\qquad\qquad\left(1+\lambda(\overline{x_a})\lambda(d)+\lambda^2(\overline{x_a})\lambda^{2}(d)\right)\\
&=(p-1)\sum_{b=1}^{p}\sum_{d=1}^{p}e\left(\frac{b-d}{p}\right)\sum_{\substack{a=2\\a\neq a_1, a_2}}^{p-1}\sum_{i=0}^{4}\lambda^i(\overline{x}_a)\sum_{j_1=0}^{2}\sum_{\substack{j_2=0\\j_1+j_2=i}}^{2}\lambda^{j_1}(b)\lambda^{j_2}(d)\\
&=(p-1)\sum_{b=1}^{p}\sum_{d=1}^{p}e\left(\frac{b-d}{p}\right)\sum_{i=0}^{4}\sum_{j_1=0}^{2}\sum_{\substack{j_2=0\\j_1+j_2=i}}^{2}\lambda^{j_1}(b)\lambda^{j_2}(d)\sum_{\substack{a=2\\a\neq a_1, a_2}}^{p-1}\overline{\lambda}(n^i(a^3-1)^i)\\
&=(p-1)\sum_{b=1}^{p}\sum_{d=1}^{p}e\left(\frac{b-d}{p}\right)\sum_{i=0}^{4}\sum_{j_1=0}^{2}\sum_{\substack{j_2=0\\j_1+j_2=i}}^{2}\lambda^{j_1}(b)\lambda^{j_2}(d)\sum_{\substack{a=2\\a\neq a_1, a_2}}^{p-1}\overline{\lambda}(n^i(a-1)^i)\\
&\qquad\qquad\qquad\qquad\qquad\qquad\qquad\qquad\qquad\qquad\qquad\qquad(1+\lambda(a)+\lambda^{2}(a))\\
&=(p-1)\sum_{b=1}^{p}\sum_{d=1}^{p}e\left(\frac{b-d}{p}\right)\sum_{i=0}^{4}\overline{\lambda}^i(n){\sum_{j_1=0}^{2}}\lambda^{j_1}(b)\lambda^{i-j_1}(d)\sum_{r=0}^{2}J(\overline{\lambda}^i,\lambda^r)+O(p^{5/2})\\
&=(p-1)\sum_{i=0}^{4}\overline{\lambda}^i(n)\sum_{r=0}^{2}J(\overline{\lambda}^i,\lambda^r){\sum_{j_1=0}^{2}}\sum_{b=1}^{p}\sum_{d=1}^{p}e\left(\frac{b-d}{p}\right)\lambda^{j_1}(b)\lambda^{i-j_1}(d)+O(p^{5/2})\\
&=(p-1)\sum_{i=0}^{4}\overline{\lambda}^i(n)\sum_{r=0}^{2}J(\overline{\lambda}^i,\lambda^r){\sum_{j_1=0}^{2}}\lambda^{i-j_1}(-1)g(\lambda^{j_1})g(\lambda^{i-j_1})+O(p^{5/2})\\
\end{align*} 
where $g$ is the classical gauss sums. For a non- principal Dirichlet character $\chi$, we know that 
\begin{align*}
g(\chi)g(\overline{\chi})=\chi(-1)p.
\end{align*} 
Using the above equality, we can break the expression in two parts as
\begin{align*}
&\sum_{i=0}^{4}\overline{\lambda}^i(n)\sum_{r=0}^{2}J(\overline{\lambda}^i,\lambda^r){\sum_{j_1=0}^{2}}\lambda^{i-j_1}(-1)g(\lambda^{j_1})g(\lambda^{i-j_1})\\
&=\sum_{r=0}^{2}J(\chi_0,\lambda^r){\sum_{j_1=0}^{2}}\lambda^{-j_1}(-1)g(\lambda^{j_1})g(\lambda^{-j_1})\\
&+\sum_{i=1}^{4}\overline{\lambda}^i(n)\sum_{r=0}^{2}J(\overline{\lambda}^i,\lambda^r){\sum_{j_1=0}^{2}}\lambda^{i-j_1}(-1)g(\lambda^{j_1})g(\lambda^{i-j_1})\\
&=J(\chi_0,\chi_0){\sum_{j_1=1}^{2}}\lambda^{-j_1}(-1)g(\lambda^{j_1})g(\lambda^{-j_1})+O(p^{3/2})\\
&=2p^2+O(p^{3/2}).
\end{align*}
This gives
\begin{align*}
A_3^{'}=2p^3+O(p^{5/2}).
\end{align*}
Also 
\begin{align*}
A_3^{''}&=\sum_{\substack{a=2\\a\neq a_1\\a\neq a_2}}^{p-1}\sum_{\substack{c=2\\c\neq a_1\\c\neq a_2}}^{p-1}\sum_{b=1}^{p}\sum_{d=1}^{p}e\left(\frac{x_ab^3-x_cd^3}{p}\right)\\
&=\sum_{\substack{a=2\\a\neq a_1\\a\neq a_2}}^{p-1}\sum_{\substack{c=2\\c\neq a_1\\c\neq a_2}}^{p-1}\sum_{b=1}^{p}\sum_{d=1}^{p}e\left(\frac{x_ab-x_cd}{p}\right)(1+\lambda(b)+\lambda^2(b))(1+\lambda(d)+\lambda^2(d))\\
&=\sum_{\substack{a=2\\a\neq a_1\\a\neq a_2}}^{p-1}\sum_{\substack{c=2\\c\neq a_1\\c\neq a_2}}^{p-1}\sum_{b=1}^{p}\sum_{d=1}^{p}e\left(\frac{b-d}{p}\right)(1+\lambda(\overline{x_a}b)+\lambda^2(\overline{x_a}b))(1+\lambda(\overline{x_c}d)+\lambda^2(\overline{x_c}d))\\
&=\sum_{\substack{a=2\\a\neq a_1\\a\neq a_2}}^{p-1}\sum_{\substack{c=2\\c\neq a_1\\c\neq a_2}}^{p-1}\left(\overline{\lambda}(x_a)g(\lambda)+\overline{\lambda}^2(x_a)g(\lambda^2)\right)\left(\overline{\lambda}(-x_c)g(\lambda)+\overline{\lambda}^2(x_c)g(\lambda^2)\right)\\
&=O(p^2).
\end{align*}
Combining the values for $A_1$, $A_2$ and $A_3$ in $\eqref{p1}$, we complete the proof Theorem \ref{mt2}.

\section{Proof of Theorem \ref{mt3} and \ref{mt4}}
Using Lemma \ref{lem-1} and then applying the estimate in Lemma \ref{lem-2}  and \ref{lem-4}, we deduce 

\begin{align*}
&\sum_{\chi\neq \chi_0}|S_p(\chi)|^2\cdot |L(1,\chi)|\\
&=\sum_{\chi\neq \chi_0}\left(p(1+\chi(a_1)+\chi(a_2))+\sum_{\substack{a=2\\a\neq a_1, a_2}}^{p-1}\chi(a)\left(g(\lambda)\lambda(\overline{x_a})+g(\lambda^{2})\lambda^2(\overline{x_a})\right)\right)\cdot |L(1,\chi)|\\
&=C\cdot (1+C_{a_1}+C_{a_2})\cdot p^2+O(p^{3/2}\ln p).
\end{align*}
Using the fact $a_1a_2\equiv 1 \mod p$, we complete the proof of Theorem \ref{mt3}. Next we write
\begin{align}\label{p7}
&\sum_{\chi\neq \chi_0 }|S_p(\chi)|^4\cdot |L(1,\chi)|\notag\\&=\sum_{\chi\neq \chi_0}\left[p(1+\chi(a_1)+\chi(a_2))+\sum_{\substack{a=2\\a\neq  a_1, a_2}}^{p-1}\chi(a)\sum_{b=1}^{p}e\left(\frac{x_ab^3}{p}\right)\right]^2\cdot |L(1,\chi)|\notag\\
&=B_1+B_2+B_3,
\end{align}
Next we have 
\begin{align*}
B_1&=p^2\sum_{\chi\neq \chi_0 }(1+\chi(a_1)+\chi(a_2))^2\cdot |L(1,\chi)|,\\
B_2&=2p\sum_{\chi\neq \chi_0 }(1+\chi(a_1)+\chi(a_2))\sum_{\substack{a=2\\a\neq a_1,a_2}}^{p-1}\chi(a)\sum_{b=1}^{p}e\left(\frac{x_ab^3}{p}\right)\cdot |L(1,\chi)|,\\
B_3&=\sum_{\chi\neq \chi_0}\left(\sum_{\substack{a=2\\a\neq a_1,a_2}}^{p-1}\chi(a)\sum_{b=1}^{p}e\left(\frac{x_ab^3}{p}\right)\right)^2\cdot |L(1,\chi)|,\\
\end{align*}
Using the estimate in Lemma \ref{lem-4} and \ref{lem-5}, we directly get 
\begin{align}\label{p4}
B_1&=p^2\sum_{\chi\neq \chi_0 }\left(1+2\chi(a_1)+2\chi(a_2)+\chi^2(a_1)+\chi^2(a_2)+2\chi(a_1a_2)\right)\cdot |L(1,\chi)|\notag\\
&=p^3\cdot C\left(3+2(C_{a_1}+C_{a_2})+C_{a_1^2}+C_{a_2^2}\right)+O(p^{5/2}\ln p).
\end{align}
Using Lemma \ref{lem-2} and Weil's estimate \cite{weil} for character sums, we get 
\begin{align}\label{p5}
B_2=O(p^{5/2}).
\end{align}
Now the expression for $B_3$ can be written as 
\begin{align}\label{p2}
B_3&=\sum_{\chi\neq \chi_0}\left(\sum_{\substack{a=1\\a\neq 1, a_1, a_2}}^{p-1}\chi(a)\sum_{b=1}^{p}e\left(\frac{x_ab^3}{p}\right)\right)^2\cdot |L(1,\chi)|\notag\\
&=\sum_{\chi\neq \chi_0}\sum_{\substack{a=2\\a\neq  a_1\\a\neq a_2}}^{p-1}\sum_{\substack{c=2\\c\neq a_1\\c\neq a_2}}^{p-1}\chi(a\overline{c})\sum_{b=1}^{p}\sum_{d=1}^{p}e\left(\frac{x_ab^3-x_cd^3}{p}\right)\cdot |L(1,\chi)|.
\end{align}
Notice that using cubic residue symbol, we have 
\begin{align}\label{p3}
&\sum_{\substack{a=2\\a\neq  a_1\\a\neq a_2}}^{p-1}\sum_{\substack{c=2\\c\neq a_1\\c\neq a_2}}^{p-1}\chi(a\overline{c})\sum_{b=1}^{p}\sum_{d=1}^{p}e\left(\frac{x_ab^3-x_cd^3}{p}\right)\notag\\
&=\sum_{\substack{a=2\\a\neq  a_1\\a\neq a_2}}^{p-1}\sum_{\substack{c=2\\c\neq a_1\\c\neq a_2}}^{p-1}\chi(a)\sum_{b=1}^{p}\sum_{d=1}^{p}e\left(\frac{n(a^3c^3-1)b^3-n(c^3-1)d^3}{p}\right)\notag\\
&=\sum_{\substack{a=2\\a\neq  a_1\\a\neq a_2}}^{p-1}\chi(a)\sum_{\substack{c=2\\c\neq a_1\\c\neq a_2}}^{p-1}\sum_{b=1}^{p}\sum_{d=1}^{p}e\left(\frac{n(a^3c^3-1)b-n(c^3-1)d}{p}\right)\notag\\&\quad\quad\quad\quad\quad\quad\quad\quad\quad\quad\quad(1+\lambda(b)+\lambda^2(b))(1+\lambda(d)+\lambda^2(d)).
\end{align}
Using Weil's \cite{weil} bound for character sums and classical estimate of Gauss sums we get 
\begin{align*}
&\sum_{\substack{c=2\\c\neq a_1, a_2}}^{p-1}\sum_{b=1}^{p}\sum_{d=1}^{p}e\left(\frac{n(a^3c^3-1)b-n(c^3-1)d}{p}\right)(1+\lambda(b)+\lambda^2(b))(1+\lambda(d)+\lambda^2(d))
\\
&=\sum_{c=2}^{p-1}\left(\overline{\lambda}(n(a^3c^3-1))g(\lambda)+\overline{\lambda}^2(n(a^3c^3-1)g(\lambda^2))\right)\\
&\quad\quad\quad\quad\quad\quad\quad\quad\quad\quad\quad\quad\quad\left(\overline{\lambda}(-n(c^3-1))g(\lambda)+\overline{\lambda}^2(-n(c^3-1)g(\lambda^2))\right)+O(p^{3/2})\\
&=O(p^{3/2}).
\end{align*}
Using the above estimate in \eqref{p2} and \eqref{p3}, and then applying Lemma  \ref{lem-2}, we deduce
\begin{align}\label{p6}
B_3=O(p^{5/2}\cdot \ln p).
\end{align} 
Finally, combining the estimate \eqref{p4}, \eqref{p5} and \eqref{p6} in \eqref{p7}, we complete the proof of Theorem \ref{mt4}. 
\section{moment of quadratic Gauss sums}
The question of capturing asymptotic formula for the moment of $k$-generalised Gauss sums seems to be a very difficult. Such questions are addressed, specially for $k=2$ by the author with his collaborators in \cite{BB, BLZ, BLZ1, zhang}. In \cite{zhang}, Zhang conjectured that,
\begin{conjecture}\label{C1}
For all positive integer $m$,
\begin{align*}
\sum_{\chi\neq\chi_0}|G(n,2,\chi;p)|^{2m}\cdot |L(1,\chi)|\sim C\sum_{\chi \bmod p}|G(n,2,\chi;p)|^{2m}, \qquad p\rightarrow+\infty,
\end{align*}
where
\begin{align}\label{constant-c}
C=\prod_p\left[1+\frac{\binom{2}{1}^2}{4^2.p^2}+\frac{\binom{4}{2}^2}{4^4.p^4}+\cdots+\frac{\binom{2m}{m}^2}{4^{2m}.p^{2m}}+\cdots\right]
\end{align}
is a constant and $\displaystyle \prod_p$ denotes the product over all primes and $\chi_0$ denote the principal character modulo $p$.
\end{conjecture}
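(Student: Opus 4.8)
The plan is to mirror, for the quadratic sums $G(n,2,\chi;p)$, the three-step architecture the paper already uses for the cubic sums, and then to isolate the precise structural reason why the proportionality constant comes out to be exactly $C$ with no correction. The first step is to establish the quadratic analogue of Lemma \ref{lem-1}. Writing $\psi$ for the Legendre symbol and performing the substitution $a\mapsto ab$ exactly as in the proof of Lemma \ref{lem-1}, one obtains, for non-principal $\chi$,
\begin{align*}
|G(n,2,\chi;p)|^2 = p\bigl(1+\chi(-1)\bigr) + g(\psi)\sum_{\substack{a=2\\ a\neq p-1}}^{p-1}\chi(a)\,\psi\bigl(n(a^2-1)\bigr),
\end{align*}
since $a^2\equiv 1\bmod p$ has exactly the two roots $a=\pm 1$, which play here the role that $1,a_1,a_2$ play in the cubic case. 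Raising this to the $m$-th power expands $|G(n,2,\chi;p)|^{2m}$ into a main diagonal block $p^m\bigl(1+\chi(-1)\bigr)^m$ together with mixed terms in which at least one factor carries a Weil-controlled character sum of size $O(\sqrt p)$; after collecting, each mixed term is a linear combination $\sum_t \kappa_t\,\chi(t)$ whose coefficients $\kappa_t$ are products of $g(\psi)$ and Jacobi sums.

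For the unweighted right-hand side of Conjecture \ref{C1} I would sum the expansion over all $\chi\bmod p$. The diagonal block factors through parity: $(1+\chi(-1))^m$ equals $2^m$ on even characters and $0$ on odd ones, so $\sum_{\chi\bmod p}p^m(1+\chi(-1))^m = 2^{m-1}p^{m+1}+O(p^m)$, while the mixed terms are pushed to lower order by orthogonality combined with Weil's bound. This pins down $\sum_{\chi\bmod p}|G(n,2,\chi;p)|^{2m}\sim 2^{m-1}p^{m+1}$, and establishing this unweighted asymptotic in closed form uniformly in $m$ is itself a prerequisite for the comparison.

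For the weighted sum I would run the same expansion and invoke Lemma \ref{lem-3}, \ref{lem-4} and \ref{lem-5}. The crucial observation is that the diagonal block again restricts to even characters, and there Lemma \ref{lem-3} gives $\sum_{\chi(-1)=1,\,\chi\neq\chi_0}|L(1,\chi)| = \tfrac12 Cp + O(p^{1/2}\ln p)$; hence the weighted diagonal contributes $2^m p^m\cdot\tfrac12 Cp = 2^{m-1}Cp^{m+1}$, which is exactly $C$ times the unweighted main term. This is the structural reason the constant is $C$ with no extra factor: for $k=2$ the non-trivial roots of $a^k\equiv 1$ assemble into the single parity projector $1+\chi(-1)$, on which the even-character sum of Lemma \ref{lem-3} acts with the clean weight $C$, whereas for $k=3$ the roots $a_1,a_2$ do not collapse into one parity class, which is precisely why Theorems \ref{mt3}--\ref{mt4} acquire the extra terms $C_{a_1},C_{\overline a_1},\dots$.

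The main obstacle is the mixed, off-diagonal terms under the $|L(1,\chi)|$ weight. In the unweighted moment these vanish cleanly by orthogonality, but Lemma \ref{lem-5} shows that the weight \emph{revives} each $\chi(t)$ with $t\neq 1$ into a genuine main-order contribution $C\,C_t\,p$; one must therefore prove that, after summing the coefficients $\kappa_t$ over all the relevant residues $t$, these revived contributions reassemble strictly below the main order $p^{m+1}$. Controlling this requires combining Weil's estimates for the $\kappa_t$ with Lemma \ref{lem-2} in order to absorb the accumulation of the $O(p^{1/2}\ln p)$ errors across the $\sim p$ residues, exactly as in the passage from Lemma \ref{lem-5} to Theorem \ref{mt4}, but now carried out uniformly in the moment order $m$. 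Making the bookkeeping of the $m$-fold product uniform in $m$, so that neither the implied constants nor the combinatorics erode the saving, is the step I expect to be genuinely hard, and is presumably why the statement is still recorded as a conjecture rather than a theorem.
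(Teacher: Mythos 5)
There is a genuine gap at the center of your argument, and it is not the one you flag at the end. Your claim that $\sum_{\chi\bmod p}|G(n,2,\chi;p)|^{2m}\sim 2^{m-1}p^{m+1}$, with all mixed terms ``pushed to lower order by orthogonality combined with Weil's bound,'' is false for every $m\ge 2$. Orthogonality does not kill the square of the character-sum block; it produces a diagonal of cardinality about $p$ on which nothing oscillates. Concretely, for $m=2$ write $|G(n,2,\chi;p)|^2=p\bigl(1+\chi(-1)\bigr)+g(\psi)S_\chi$ with $S_\chi=\sum_{t\neq\pm1}\chi(t)\psi\bigl(n(t^2-1)\bigr)$. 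Summing $g(\psi)^2S_\chi^2$ over all $\chi$, orthogonality leaves the diagonal $t_2\equiv\overline{t}_1$, where $(t_1^2-1)(\overline{t}_1^{\,2}-1)\equiv-\bigl((t_1^2-1)\overline{t}_1\bigr)^2 \pmod p$, so the Legendre symbol equals $\psi(-1)$ identically; since $g(\psi)^2=\psi(-1)p$, this block is $\psi(-1)p\cdot\psi(-1)(p-1)(p-3)+O(p^{5/2})=p^3+O(p^{5/2})$, a main term. Hence the fourth moment is $3p^3+O(p^{5/2})$, not $2p^3$, and in general the unweighted constant is $\binom{2m-1}{m}$ (namely $1,3,10,35,\dots$), not $2^{m-1}$. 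The paper's own proof of Theorem \ref{mt2} exhibits exactly this phenomenon in the cubic case: the parity-type block $A_1$ gives $3p^3$, but the off-diagonal block $A_3$ contributes another $2p^3$, whence $5p^3$. Consequently your ``structural reason'' for the constant $C$ compares the wrong quantities. The true mechanism, visible already at $m=2$, is that \emph{each} main-order block acquires the factor $C$: the parity block through Lemma \ref{lem-4}, and the diagonal blocks through Lemma \ref{lem-5} at $t\equiv 1$, where $C_1=r(1)^2=1$ makes the revived term $C\cdot C_1\cdot p=Cp$ coincide with the average in Lemma \ref{lem-4}. Your bookkeeping, which treats every coefficient $\kappa_t$ as Weil-small, misses precisely the coefficient $\kappa_1$, which has size about $p^{m}$ and carries a main term.

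Two further corrections. First, the paper contains no proof of this statement: it is Zhang's conjecture \cite{zhang}, quoted in Section 6, and the paper records that it has been completely established elsewhere (\cite{BB} for $m\le4$, \cite{BLZ} for $m=5$, and \cite{BLZ1} together with Xi \cite{PX} in general); determining the unweighted asymptotics $\binom{2m-1}{m}p^{m+1}$ uniformly in $m$ requires input well beyond orthogonality plus Weil, so your closing guess that uniform-in-$m$ bookkeeping is ``presumably why the statement is still recorded as a conjecture'' misreads both the status of the problem and the nature of its difficulty. Second, the one sound insight in your proposal is the parity observation, though it is better phrased via Lemma \ref{lem-5} than Lemma \ref{lem-3}: for $k=2$ the nontrivial root of $a^k\equiv1\bmod p$ is $-1$, which as an integer is $t=p-1$, and $C_{p-1}\ll 1/p$, so the corresponding revived term $C\,C_{p-1}\,p$ is $O(1)$; for $k=3$ the roots $a_1,\overline{a}_1$ can be small integers, so $C\,C_{a_1}\,p$ survives at main order. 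That is indeed why Theorems \ref{mt3} and \ref{mt4} carry extra constants while Conjecture \ref{C1} has the clean factor $C$, but it concerns only the $\chi(t)$ terms with $t$ a root of unity and says nothing about the diagonal blocks that dominate for $m\ge2$.
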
 
 Zhang in the same paper \cite{zhang} studied the hybrid power mean value involving the generalized Quadratic Gauss sums and used estimates for character sums and analytic methods
to study second, fourth and sixth power mean values of generalized quadratic Gauss sums. More study on higher power moments of generalized quadratic Gauss sums can be found in \cite{yuan}. In \cite{BB},the author and Barman proved Conjecture \ref{C1} upto $m\leq 4$, which is subsequently extended to $m=5$ by the author, A.R. Le\'{o}n and W.P. Zhang \cite{BLZ}. Very recently the results in \cite{BLZ1} and along with the results of Xi in \cite{PX} completely established the Conjecture \ref{C1}. 
\par It is very natural to study whether similar asymptotic relation is true for $k>2$. In this article we focus on the case $k=3$. Our results in Theorem 1-4 establish that it is not possible to establish a conjecture analogous to Conjecture \ref{C1} for cubic Gauss sums. Although the author thinks that it is possible to establish such conjecture for even $k$. 

\section{Acknowledgement}
During the prepartion of this article the author was supported by the NBHM post-doctoral fellowship (No.:0204/3/2021/R\&D-II/7363)).



\begin{thebibliography}{99}

%
%



\bibitem{BB}
N. Bag, R. Barman, {\it Higher Order Moments of Generalized Quadratic Gauss Sums Weighted by $L$-functions}, Asian Journal of Mathematics, 25(3) (2021), 413-430.

\bibitem{BLZ}
N. Bag, A. Rojas-Le\'{o}n, W.P. Zhang, {\it An explicit evaluation of 10-th power moment of generalized quadratic Gauss sums and some applications}, Functiones et approximatio, DOI:  10.7169/facm/1995.

\bibitem{BLZ1}
N. Bag, A. Rojas-Le\'{o}n, W.P. Zhang {\it On some conjectures on Generalized quadratic Gauss sums and related problems},  		arXiv:2105.11214 [math.NT]

\bibitem{HB-P}
D. R. Heath-Brown and S. J. Patterson, {\it The distribution of Kummer sums at prime
arguments}, Journal f¨ur die Reine und angewandte Mathematik 310 (1979), 111–130.

\bibitem{cochrane}
T. Cochrane and Z. Y. Zheng, {\it Pure and mixed exponential sums}, Acta Arith. 91 (1999), 249-278.



\bibitem{kummer}
E. E. Kummer, {\it De residuis cubicis disquisitiones nonnullae analyticae}, Journal f¨ur
die Reine und angewandte Mathematik, vol. 32 (1846), 341-359.


\bibitem{pat}
S. J. Patterson, {\it On the distribution of Kummer sums}, J. Reine Angew. Math. 303(304) (1978), 126–
143.


\bibitem{PX}
P. Xi, {\it Moments of certain character sums that are unnamed}, 	arXiv:2105.15051 [math.NT].






\bibitem{yuan}
Y. He and Q. Liao, {\it On an identity associated with Weil's estimate and its applications}, Journal of Number Theory 129 (2009), 1075-1089.
%
%


\bibitem{weil}
A. Weil, {\it On some exponential sums}, Proc. Nat. Acad. Sci. U.S.A. 34 (1948), 203-210.

\bibitem{weil-2}
A. Weil, {\it Basic number theory}, Springer-Verlag, New York, 1974.


\bibitem{zhang}
 W.P. Zhang, {\it Moments of generalized quadratic Gauss sums weighted by $L$-functions},
Journal of Number Theory 92 (2002), 304-314.

\end{thebibliography}
\end{document}